\newcommand{\p}{p_0}
\def\ds{\displaystyle}
\def \R {{\mathbb R}}
\def \N {{\mathbb N}}
\def \Z {{\mathbb Z}}
\def \C {\mathcal{C}}
\def \d {\mathbf d}
\def \U {\mathcal U}
\def\L{\mathcal{L}}
\def\K{\mathcal{K}}
\def\S{\mathcal{S}}
\def\T{\mathcal{C}}
\def\M{\mathcal{M}}
\def \vsm{\vskip 0.2 truecm}
\def \ol {\overline}
\newtheorem{theorem}{Theorem}[section]
\newtheorem{lemma}{Lemma}[section]
\newtheorem{proposition}{Proposition}[section]
\theoremstyle{definition}
\newtheorem{definition}[theorem]{Definition} 
\theoremstyle{remark}
\newtheorem{remark}[theorem]{Remark}
\begin{document}

\title[A Converse Lyapunov theorem with regulated cost]{A converse Lyapunov-type theorem  \\for control systems with regulated cost} 
\thanks{*Corresponding author}
\thanks{This research is partially supported by the  INdAM-GNAMPA Project 2023 CUP E53C22001930001.}

\author[Anna Chiara Lai]{Anna Chiara Lai}
\address{Anna Chiara Lai. Dipartimento di Scienze di Base e Applicate per l'Ingegneria, Sapienza Università di Roma, via Antonio Scarpa,10 - 00161, Rome, Italy} 
\email{annachiara.lai@uniroma1.it}

\author[Monica Motta]{Monica Motta* }
\address{Monica Motta. Dipartimento di Matematica ``Tullio Levi-Civita", Università di Padova, via Trieste, 63 - 35121, Padua, Italy}
\email{motta@math.unipd.it}

\keywords
{ Converse Lyapunov-type theorem , Asymptotic controllability with regulated cost,  Optimal control , Nonlinear theory, Viscosity solutions}
\subjclass[2020]{ 93D30, 93B05,  49J15, 93C10,  49L25}







\maketitle
\begin{abstract}
Given  a nonlinear control system, a target set,  a nonnegative integral cost, and a  continuous function $W$, we say that   the system   is  {\em globally asymptotically controllable to the target  with $W$-regulated cost}, whenever, starting from any point $z$, among the strategies that achieve classical asymptotic controllability  we can select one that also keeps the cost less than $W(z)$.
  In this paper, assuming mild regularity hypotheses on the data, we prove that a necessary and sufficient condition for  global asymptotic controllability with regulated cost is  the existence of a special, continuous Control Lyapunov function, called a {\em Minimum Restraint function}. The main novelty is the necessity implication, obtained here for the first time. 
Nevertheless, the sufficiency condition extends previous results based on semiconcavity of the Minimum Restraint function,   while we require mere continuity.   \end{abstract}


\section{Introduction}
In this paper we extend the classical  equivalence result between global asymptotic controllability to a closed set $\C\subseteq\R^n$  and the existence of a Control Lyapunov  function, to the case where the control system is associated with a cost  which has to be {\em regulated} (i.e., loosely speaking,  kept bounded).  In particular,   we consider a nonlinear control system of the form 
\begin{equation}\label{Eintro}
\dot x(t)=f(x(t),u(t)), \qquad x(0)=z\in\R^n\setminus\C, \qquad u(t)\in U\subseteq\R^m,
\end{equation}
and an  integral cost  
\begin{equation}\label{minprobintro}
\int_ 0^{{T}_{z}(u) } l(x(t),u(t))\, dt,
\end{equation}
where the running cost  $l$ is nonnegative  and the time $T_z(u)\le+\infty$  satisfies\footnote{We will consider assumptions under which, given $z$ and $u$, the corresponding solution to \eqref{Eintro} is uniquely determined.}
\begin{equation}\label{Cintro}
 x(t)  \in \R^n\setminus\C \ \ \text{for all $t\in[0,T_z(u))$}, \quad \lim_{t\to {T}^-_{z}(u)}  \d (x(t))=0
\end{equation}
(for any $y\in\R^n$,  $\d(y)$ denotes the  distance of  $y$ from   $\T$).  As customary, the control system \eqref{Eintro} is called {\em globally asymptotically controllable   to $\C$}   if there exists a $\K\L$ function $\beta$  such that, for any initial point $z$, there is a measurable control $u$   whose corresponding solution to \eqref{Eintro} satisfies  
$\d(x(t))\leq \beta(\d(z),t)$ for all  $t\geq 0$. If, in addition, there exists a  continuous, proper, and positive definite  function  $W:\overline{\R^n\setminus\T}\to[0,+\infty)$,  such that 
$$
\int_0^{{T}_{z}(u)  }l(x(t),u(t))\,dt\le W(z),
$$
we say that \eqref{Eintro}-\eqref{minprobintro}  is globally asymptotically controllable   to $\C$ {\em with $W$-regulated} (or simply, {\em regulated}) {\em cost}.  Slightly extending the original definition   in  \cite{MR13}, we define  as {\it Minimum Restraint function}     any function $V:\overline{\R^n\setminus\C}\to\R$,  which is continuous, positive definite, and proper, and satisfies the {\em decrease condition}
$$
\min_{u\in U}\Big\{\langle p\,,\,f(z,u)\rangle+\p(V(z))\,l(z,u)\Big\}\le-\gamma(V(z)) \ \ \text{for all} \ z\in\R^n\setminus\C, \   p\in \partial_PV(z),
$$
where  $\partial_PV(z)$ is the proximal subdifferential, for some continuous  increasing functions $p_0:(0,+\infty)\to[0,1]$  and  $\gamma:(0,+\infty)\to(0,+\infty)$. Since $p_0$ and $l$ are nonnegative,  this decrease condition implies the classical relation which characterizes Control Lyapunov functions, so that a Minimum Restraint function is actually a special Control Lyapunov function.
\vsm
The main result of the paper is a `Converse  Lyapunov-type theorem', which consists in  proving  that  the existence of  a continuous Minimum Restraint function for some  $p_0$  such that $1/p_0$ is integrable at $0^+$,  is  a necessary and sufficient condition for  global asymptotic controllability with regulated cost.
\vsm
In the case without cost (i.e., for $l\equiv 0$), the equivalence between asymptotic controllability to a point or to a set  of a nonlinear control system  and the existence of a (possibly, non differentiable) Control Lyapunov function,  has been a central topic  in control theory since  the 1980s and nowadays it is established  under very general assumptions (see e.g. the survey papers \cite{C10,K15}, the references therein,  and a recent extension to impulsive control systems  \cite{LM22}).     
The  key idea of many   converse Lyapunov theorems  is to convert the control system into a differential inclusion (see, for instance,   \cite{CLRS,R00}, where $\C=\{0\}$,  and \cite{KT04},  for  a general target) and then use the result on the existence of a Lyapunov function for the differential inclusion to get the promised Control Lyapunov function.  However,  it seems difficult to adapt this approach to the case with a cost,  which we need to estimate from above. Also note  that the cost cannot be treated as an additional state variable, because it is  increasing and cannot be associated with any target. 
\vsm
 In this paper we are inspired instead by  early work \cite{S83}, in which a  suitable cost is associated with the original control system, whose value function turns out to be a continuous Control Lyapunov function. However, even following this approach,  the generalization of the nonsmooth Lyapunov literature to allow for the presence of the cost  \eqref{minprobintro} is far from trivial. In particular, we cannot, as one might think,  simply add to the cost considered in  \cite{S83} the current cost $l$. In fact,  the presence of the cost \eqref{minprobintro} substantially changes  the whole construction of both the  function $\beta$, characterizing asymptotic  controllability,  and the cost used to obtain a Control Lyapunov function.

 The proof technique is also novel, in that it is based on a combination of the classical Lyapunov procedures adopted in \cite{S83} and  a viscosity solutions  approach.  Thanks to this, differently from \cite{S83}, in the proof of necessity we do not need to use relaxed controls and exhibit an explicit construction of the $\K\L$ function $\beta$ and of the  bound $W$ on the cost.  Specifically,  starting from the observation that a continuous, positive definite and proper function $V:\overline{\R^n\setminus\C}\to\R$ solves the decrease condition if and only if it is a viscosity supersolution of the Hamilton-Jacobi-Bellman equation
$$
\max_{u\in U}\Big\{-Dv(z)\,,\,f(z,u)\rangle-[\p(v(z))\,l(z,u)+ \gamma(V(z))]\Big\}=0  \quad \ \text{for all} \  z\in\R^n\setminus\C,
$$
we derive both the facts that the value function built in the proof of the necessity implication satisfies the decrease condition  and the sufficiency implication,  from a viscosity super-optimality principle (Proposition \ref{Th_supsol} below). However, this principle  is not included in the known theory, since $p_0$ is merely continuous and we do not assume the usual  linear growth hypothesis on the dynamics function $f$, but $x$-local Lipschitz continuity only  (see  \cite[Thm. 2.40]{BCD} or  \cite[Thm. 3.3]{GS04}). We  prove it  by introducing a slight generalization of a classical comparison principle for infinite horizon problems (Lemma \ref{LCP} below),  interesting in itself.
\vsm 
We leave for future investigation the issue of the existence of a semiconcave Minimum Restraint function, which, as is well known, plays a key role in the feedback stabilizability of nonlinear systems, both with cost (see \cite{LM18,LM19,LM20}, and \cite{FMR1,FMR2} for a notion of degree-$k$ Minimum Restraint function) and without  (see e.g. \cite{R00,R02,KT04,LM22}, the survey paper \cite{C10}  and references therein, and  \cite{MR18,F21}  for a notion of degree-$k$ Control Lyapunov function).
\vsm
 The paper is organized as follows. In the remaining part of this section we give some notations.  In Section \ref{s2} we introduce precisely   assumptions  and definitions and state our Converse  Lyapunov-type theorem.  Section \ref{s3} is devoted to prove that global asymptotic controllability with $W$-regulated cost for some $W$, implies the existence of a continuous Minimum Restraint function, while the converse implication is obtained in   Section \ref{s4}.

\subsection{Notation}\label{Sprel}
For $a,b\in\R$, we set $a\vee b:=\max\{a,b\}$, $a\land b:=\min\{a,b\}$.  Let $\Omega\subseteq \R^N$ for some integer $N\ge1$ be a nonempty set. For every $r\geq 0$, we set $B_r(\Omega):=\{x\in \R^n\mid \ d(x,\Omega)\leq r\}$, where $d$ is the usual Euclidean distance. We use $\overline\Omega$,  $\partial\Omega$, and  $\mathring \Omega$ to denote the closure, the boundary, and the interior of $\Omega$, respectively.  For any interval $I\subseteq\R$,    $\M(I,\Omega)$,  $AC(I,\Omega)$ are the sets of  functions  $x:I\to\Omega$, which are Lebesgue measurable or absolutely continuous, respectively,  on $I$. 
	 When no confusion may arise, we  simply write $\M(I)$,  $AC(I)$. 
	 \vsm
	 As customary, we   use   ${\mathcal{KL}}$ to denote the set of all continuous functions 
	$\beta:[0,+\infty)\times[0,+\infty)\to[0,+\infty)$ such that:
	(1)\, $\beta(0,t)=0$ and $\beta(\cdot,t)$ is strictly increasing and unbounded 
	for each $t\ge0$;
	(2)\, $\beta(r,\cdot)$ is strictly decreasing  for each $r\ge0$; (3)\, $\beta(r,t)\to0$ 
	as $t\to+\infty$ for each $r\ge0$.
	\vsm
	Given an open set $\Omega\subseteq\R^N$, a continuous function $W:\overline{\Omega} \to[0,+\infty)$ is said {\em positive definite} if  $W(x)>0$ \,$\forall x\in\Omega$ and $W(x)=0$ \,$\forall x\in\partial\Omega$. It is called {\em proper }  if the pre-image $W^{-1}(K)$ of any compact set $K\subset[0,+\infty)$ is compact.   
 Let $x\in\Omega$. The set
$$
D^-W(x):=\left\{p\in\R^n\mid \ \liminf_{y\to x}\frac{W(y)-W(x)-p(y-x)}{|y-x|}\ge0\right\},
$$ 
is  the (possibly empty)  {\em viscosity subdifferential of  $W$ at $x$}.  We recall that  $p\in D^-W(x)$ if and only if there exists $\varphi\in C^1(\Omega)$  such that $D\varphi(x)=p$ and  $W-\varphi$  has a local minimum at $x$ (see e.g. \cite{BCD}). We use $\partial_PW(x)$  to denote the {\em proximal subdifferential of $W$ at $x$} (which may very well be empty).  As it is  known,   $p$ belongs to $\partial_PW(x)$  if and only if there exist $\sigma$ and $\eta>0$ such that
$$
W(y)-W(x)+\sigma|y-x|^2\ge \langle p\,,\, y-x\rangle \qquad \ \text{for all} \  y\in  B_\eta(\{x\}).
$$
 The {\em limiting subdifferential  $\partial_LW(x)$ of $W$ at $x\in \Omega$,}  is defined as
	$$
	\displaystyle \partial_LW(x) := \Big\{\lim_{i\to+\infty} \,  p_i\mid \  p_i\in \partial_PW(x_i), \ \lim_{i\to+\infty} x_i=x\Big\}.
	$$
The  set  $\partial_LW(x)$ is always closed.  If $W$ is locally Lipschitz continuous on $\Omega$,  $\partial_LW(x)$ is compact, nonempty at every point, the set-valued map $x\rightsquigarrow \partial_LW(x)$ is upper semicontinuous, and the  Clarke generalized gradient at $x$  coincides with co\,$\partial_LW(x)$.  As sources for  nonsmooth analysis  we refer e.g. to \cite {CS,CLSW,Vinter}.

\vsm
Let $W:\overline{\R^n\setminus\T}\to[0,+\infty)$ be a continuous, proper, and positive definite function. We can relate the level sets of   $W$ with the ones of the distance function $\d$, by introducing the functions  $d_{W^+}$, $d_{W^-}:(0,+\infty)\to (0,+\infty)$, given by
\begin{align}
d_{W^-}(r):=\sup\left\{\alpha>0\mid \ \ \{\tilde z\mid \ W(\tilde z)\le\alpha\}\subseteq  \{\tilde z\mid \  \d(\tilde z) \le r\}\right\},\label{d-def} \\ 
d_{W^+}(r):=\inf\left\{\alpha>0\mid \ \ \{\tilde z\mid \ W(\tilde z)\le\alpha\}\supseteq  \{\tilde z\mid \  \d( \tilde z)\le r\}\right\}\label{d+def}. 
\end{align}
By \cite[Lemma 3.6]{LM20}, these functions are well-defined, increasing,  and  
\begin{equation}\label{Llim}
\lim_{r\to 0^+}d_{W^+}(r)=\lim_{r\to 0^+}d_{W^-}(r)=0, \quad \lim_{r\to +\infty}d_{W^+}(r)=\lim_{r\to +\infty}d_{W^-}(r)=+\infty.
\end{equation}
Moreover, one has
\begin{equation}\label{Ldis}
d_{W^-}(\d(x))\le W (x)\le d_{W^+}(\d(x)) \quad \ \text{for all} \    x\in\R^n\setminus\C.
\end{equation}
Approximating $d_{W^-}$ from below and  $d_{W^+}$ from above if necessary, we can  thus assume the existence of  continuous, strictly increasing functions, still denoted  $d_{W^-}$ and $d_{W^+}$,   satisfying  \eqref{Llim} and \eqref{Ldis}.


\section{A Converse  Theorem for Minimum Restraint functions}\label{s2} 
Throughout the whole paper we assume that: 
{\em\begin{itemize}
\item[{\rm (i)}] $U\subset\R^m$ is a nonempty compact set,  $\C\subset\R^n$ is a nonempty, closed subset with compact boundary;
\item[{\rm (ii)}]  the functions $f:\R^n\times U\to\R^n$, $l:\R^n\times U\to[0,+\infty)$ are   continuous on $\R^n\times U$,  $x\mapsto f(x,u)$ and $x\mapsto l(x,u)$  are locally Lipschitz continuous, uniformly with respect to $u\in U$.  
\end{itemize}}
\vsm 
Under these assumptions, given   $z\in \R^n\setminus\T$ and   $u\in \M([0,+\infty),U)$, there exist a maximal time $T^{max}\le+\infty$ and  a unique solution $x\in AC([0,T^{\max}),\R^n)$ such that  $x(0)=z$ and 
\begin{equation}\label{Egen}
	  \dot x(t)=f(x(t),u(t)), \qquad\text{a.e. $t\in[0,T^{\max})$.} 
	 \end{equation} 
This solution (or trajectory) will be denoted $x(\cdot\,,u,z)$.  If, in addition, some $c\ge0$ is given, we define the corresponding cost $ x^0(\cdot\,,u,c,z)$ as
 \begin{equation}\label{Cgen}
	 x^0(t,u,c,z):=c+\int_0^tl(x(t,u,z),u(t))\,dt \qquad \ \text{for all} \   t\in[0,T^{\max}).
	 \end{equation}

Let us preliminarily introduce some definitions. 
\begin{definition}[Admissible controls, trajectories, and costs]\label{Admgen} 
 Given an initial condition $z\in \R^n\setminus\T$, a control  $u\in \M([0,+\infty),U)$ is called {\em admissible from $z$} if  there exists $0<T_z(u)\le T^{\max}\le +\infty$ such that 
 $$
 x(t):=x(t,u,z)\notin \C \quad\ \text{for all} \   t\in[0,T_z(u)); \quad     \lim_{t\to T^-_z(u)}\d(x(t))=0 \quad\text{if $T_z(u)<+\infty$.}
 $$
  The set of admissible controls from $z$ will be denoted $\U(z)$. Given $z\in \R^n\setminus\T$, $c\ge0$, and a  control $u\in \U(z)$, we will extend the corresponding trajectory $x=x(\cdot\,,u,z)$ and cost $x^0:=x^0(\cdot\,,u,c,z)$ to $[0+\infty)$, by setting\,\footnote{The limit always exists, as  $\partial\T$ is compact and  $(x^0,x)$ is Lipschitz continuous in any compact set $\overline{B_R(\T)\setminus\T}$, $R>0$. }  
  $$
  (x^0,x)(t):=\lim_{t\to T^-_z(u)}(x^0,x)(t) \qquad\text{for any $t\geq T_z(u)$.}
  $$
 We will call  $(x,u)$ and  $(x^0,x,u)$ (both defined on $[0,+\infty)$)  an {\em admissible pair from $z$} and   an {\em admissible triple from $(c,z)$,}  respectively.
   \end{definition}
We recall the notion of global asymptotic controllability with regulated cost, first introduced in \cite{MR13}.  
In the following,   we will  refer to any function $\beta\in{\mathcal{KL}}$   as a {\em descent rate}. 
	
	\begin{definition}[GAC with regulated cost]\label{DGAC}  The control system \eqref{Egen}  is {\em globally asymptotically controllable  -- in short,  GAC --   to $\C$}   if there exists a descent rate $\beta$ such that, for any initial point $z\in\R^n\setminus \C$, there is an admissible pair $(x,u)$ from $z$,  satisfying
\begin{equation}\label{Dd}
\d(x(t))\leq \beta(\d(z),t) \qquad \ \text{for all} \   t\geq 0.
\end{equation}
If, in addition, there exists a  continuous, proper, and positive definite  function  $W:\overline{\R^n\setminus\T}\to[0,+\infty)$,  such that  there is an  admissible triple $(x^0,x,u)$ from $(0,z)$ associated with a pair  $(x,u)$ for which \eqref{Dd} is valid, also  satisfies
 \begin{equation}\label{DWreg}
x^0(t)=\int_0^{t }l(x(s),u(s))\,ds\le W(x)  \qquad \ \text{for all} \   t\geq 0,
\end{equation}
we say that \eqref{Egen} with the cost \eqref{Cgen}  is {\em globally asymptotically controllable    to $\C$ with $W$-regulated cost}. 
In this case, we will often  simply say that  \eqref{Egen}-\eqref{Cgen}  is GAC (to $\T$) with regulated cost.
\end{definition}

Given a descent rate $\beta$ and a continuous, proper, and positive definite function $W:\overline{\R^n\setminus\T}\to[0,+\infty)$,  for any $z\in\R^n\setminus\T$   such that $\U(z)\ne\emptyset$,   we  set
 $$
\U_{\beta,W}(z):=\left\{
\begin{array}{l} u\in\U(z)\mid   \text{the admissible triple $(x^0,x,u)$  from $(0,z)$  satisfies}  \\[1.5ex]
\ \qquad \qquad  \text{$\d(x(t)) < \beta(\d(z),t)$ and $x^0(t)\le W(z)$ for all $t\ge0$}
   \end{array}
   \right\}.
$$  
{Since in the definition of GAC it is clearly equivalent to replace the ``$\le$" in \eqref{Dd} with  ``$<$", when  \eqref{Egen}-\eqref{Cgen} meet  the properties in Def. \ref{DGAC} for some $\beta$, $W$,  we can assume without loss of generality that $\U_{\beta,W}(z)\ne\emptyset$ for all $z\in\R^n\setminus\C$. }
 
 \vsm
Let us now give a slightly extended version of the notion of  Minimum Restraint  function   for  \eqref{Egen}-\eqref{Cgen}, firstly introduced in  \cite{MR13}. To this end,  we consider the
Hamiltonian  
\begin{equation}\label{Ham}
\ds H(x,\p,p):=\min_{u\in U}\big\{\langle p\,,\,f(x,u)\rangle+\p\,l(x,u)\big\}.
\end{equation}
 
 \begin{definition}[MRF]\label{defMRF}
   Let $W:\ol{\R^n\setminus {\T}}\to[0,+\infty)$ be a continuous function, which is    positive definite  and  proper. We say that $W$  is a \emph{Minimum Restraint function -- in short,  MRF --} for  \eqref{Egen}-\eqref{Cgen} if  it satisfies the   {\emph{ decrease condition}:}\,\footnote{This  means that $H(x,\p(W(x)), p )\le-\gamma(W(x))$ for every $p\in \partial_P W(x)$, where $\partial _P W(x)$ is the proximal differential of $W$ at $x$ (see  Subsection \ref{Sprel}).} 
    \begin{equation}\label{MRH} 
    H (x,\p(W(x)), \partial_P W(x) )\le-\gamma(W(x)) \quad \ \text{for all} \   x\in {{\R^n}\setminus\T},   
    \end{equation}
 for some  continuous, increasing function $\p:(0,+\infty)\to[0,1]$ and some continuous, strictly increasing function $\gamma:(0,+\infty)\to(0,+\infty)$.
\end{definition}
As noted in the introduction, a  MRF is a particular Control Lyapunov function, in which the classical decrease condition 
$$
\min_{u\in U} \langle  \partial_P W(x) \,,\,f(x,u)\rangle\le-\gamma(W(x)) \quad \ \text{for all} \   x\in {{\R^n}\setminus\T},
$$
 is replaced by the stronger  condition \eqref{MRH},  also involving the current cost $l$.

Finally,  we consider the following integrability condition.
\begin{definition}\label{Def_p_0}
Let $p_0:(0,+\infty)\to[0,1]$ be an increasing, continuous  function. We say that $p_0$ satisfies the {\em  integrability condition {\rm (IC)}}, when $1/p_0$ is integrable at $0^+$, namely,  we can define the $C^1$, strictly increasing   function $P:[0,+\infty) \to[0,+\infty)$, given by
\begin{equation}\label{P}
P(v):=\int_0^v\frac{dv}{p_0(v )} \qquad\ \text{for all} \   v\ge0,
\end{equation}
 and, moreover, $P$ satisfies $\ds\lim_{v\to+\infty}P(v)=+\infty$.
\end{definition}
Clearly, if $p_0$ is a positive constant, as in the original definition of MRF, it trivially satisfies  condition (IC).
We are now ready to state our main result.
\begin{theorem}[Converse MRF Thm.]\label{thm3}
The following properties are equivalent:
\begin{itemize}
\item[{\rm (i)}] system \eqref{Egen} with cost \eqref{Cgen} is GAC to $\C$ with regulated cost;
\item[{\rm (ii)}] there exists a continuous    MRF for \eqref{Egen}-\eqref{Cgen}, for some $p_0$ and $\gamma$  such that $p_0$ satisfies the integrability condition {\rm (IC)}.
\end{itemize}
 \end{theorem}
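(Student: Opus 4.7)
The theorem splits into two implications of quite different character, with the necessity direction being the novel contribution.

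For the sufficiency (ii)$\Rightarrow$(i), I would argue that the proximal decrease inequality \eqref{MRH} makes a continuous MRF $V$ a viscosity supersolution of the Hamilton-Jacobi-Bellman equation
\begin{equation*}
\max_{u\in U}\Big\{-\langle Dv,f(x,u)\rangle-\p(V(x))\,l(x,u)-\gamma(V(x))\Big\}\ge 0, \qquad x\in\R^n\setminus\T.
\end{equation*}
Invoking the super-optimality principle (Proposition \ref{Th_supsol}), I would select for each initial $z$ an admissible control $u\in\U(z)$ along which the quantity $t\mapsto P(V(x(t)))+\int_0^t l(x(s),u(s))\,ds$ is non-increasing with dissipation rate $\gamma(V(x(t)))/\p(V(x(t)))$. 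Here the integrability condition {\rm (IC)} is essential, since it both makes $P$ a proper $C^1$ bijection $[0,+\infty)\to[0,+\infty)$ and, via the change of variable $w=P(V)$, linearises the decrease inequality. Integrating yields simultaneously (a) the cost bound $x^0(t)\le P(V(z))=:W(z)$, with $W$ continuous, positive definite and proper; (b) monotone convergence $V(x(t))\to 0$, hence $\d(x(t))\to 0$; and (c) a descent rate $\beta\in\mathcal{KL}$ obtained by comparison with the scalar ODE $\dot y=-\gamma(P^{-1}(y))/\p(P^{-1}(y))$ combined with the two-sided estimate \eqref{Ldis}.

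For the converse (i)$\Rightarrow$(ii), I would define $V$ as the value function of an auxiliary optimal control problem over admissible triples from $z$, of the general form
\begin{equation*}
V(z):=\inf_{u\in\U_{\beta,W}(z)}\int_0^{+\infty}\Big[\alpha(\d(x(t)))+\pi(x^0(t))\,l(x(t),u(t))\Big]\,dt,
\end{equation*}
with carefully chosen positive, continuous weights $\alpha$ and $\pi$. Compared with Sontag's construction in \cite{S83}, the novelty is the multiplicative weight $\pi(x^0(t))$ on $l$: as the authors warn, merely adding $l$ to Sontag's integrand does not produce a proper function with the right decrease. Using the estimates $\d(x(t))\le\beta(\d(z),t)$ and $x^0(t)\le W(z)$ from Definition \ref{DGAC}, I would prove $V$ finite, continuous, positive definite and proper on $\overline{\R^n\setminus\T}$; continuity near $\partial\T$ is the delicate point, since $\U_{\beta,W}(z)$ degenerates there and one must stitch together near-optimal controls at nearby base points. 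Dynamic programming then identifies $V$ as a viscosity solution of the associated Bellman equation, and reading the supersolution property through a standard proximal-approximation argument produces \eqref{MRH} with an appropriate $\p$, arranged so that $1/\p$ is integrable at $0^+$.

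The principal obstacle is the necessity direction, and within it the reconciliation of the two different ``locations'' of the weight on $l$: the natural integrand in the value function involves $\pi$ evaluated at the cost variable $x^0$, whereas the MRF decrease \eqref{MRH} demands a weight $\p$ evaluated at $V(z)$. Bridging this gap requires the comparison principle for infinite-horizon problems (Lemma \ref{LCP}) to convert an integral super-optimality inequality into the pointwise proximal inequality \eqref{MRH}, plus a careful choice of $\pi$ ensuring that the induced $\p$ inherits the integrability (IC). A secondary technical hurdle is that $f$ is only locally Lipschitz, without the usual linear growth, so Proposition \ref{Th_supsol} must be established outside the standard framework of \cite[Thm.\ 2.40]{BCD} and \cite[Thm.\ 3.3]{GS04}; this is exactly the role played by the new comparison lemma.
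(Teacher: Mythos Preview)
Your sufficiency sketch is close in spirit to the paper's, which also rests on Proposition~\ref{Th_supsol}. But the super-optimality principle does \emph{not} hand you a single control along which $t\mapsto P(W(x(t)))+\int_0^t l\,ds$ is non-increasing: \eqref{SOP} only says that for each $\varepsilon>0$ some $u\in\U(z)$ makes the bracket $\le W(z)+\varepsilon$ for all $T$, which is weaker than pointwise monotonicity. The paper closes this gap by a recursive halving construction: apply \eqref{SOP} with $\varepsilon_1=W(z)/2$ to reach a point $z_1$ with $W(z_1)=W(z)/2$, then restart from $z_1$ with $\varepsilon_2=W(z_1)/2$, and concatenate. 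This yields the cost bound $\bar W(z)=4P(W(z)/2)$ rather than your $P(W(z))$, and the $\K\L$ rate is assembled from the resulting uniform time estimates \eqref{rel3N}, not from a scalar comparison ODE.

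On necessity your proposal diverges more seriously from the paper, and the deviation is a genuine gap rather than an alternative route. The paper does \emph{not} weight $l$ by a function of the accumulated cost $x^0$: it takes $p_0\equiv 1$ throughout and defines
\[
V(z)=\inf_{u}\int_0^{T_z(u)}\big[\ell(\d(x))+l(x,u)\big]\,dt,
\]
with the infimum over \emph{all} measurable controls (set to $+\infty$ outside $\U(z)$). The entire burden falls on the construction of $\ell$: Lemmas~\ref{l1} and~\ref{l2} build it so that $V$ is finite, continuous, positive definite and proper, and then the supersolution property of $V$ for the associated HJB (Lemma~\ref{l4}) reads directly as $H(z,1,\partial_P V(z))\le -\ell(\d(z))\le -\gamma(V(z))$ with $\gamma:=\ell\circ d_{V^+}^{-1}$. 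The warning in the introduction that one cannot ``simply add $l$ to Sontag's cost'' refers to the need to redesign $\ell$ (and the preparatory $\bar\beta$, $\bar W$ of Lemma~\ref{l1}), not to the need for a cost-dependent weight on $l$.

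Your weight $\pi(x^0(t))$ creates exactly the obstacle you then flag as principal. With that integrand the natural value function depends on the extended state $(z,c)$, and its restriction to $c=0$ does not satisfy a Bellman equation in $z$ alone; there is no clean passage from the resulting supersolution property to \eqref{MRH} with a $\p$ evaluated at $V(z)$. Minimising over $\U_{\beta,W}(z)$ compounds the difficulty, since that set is not stable under time-shifts (a control in $\U_{\beta,W}(z)$ restricted to $[T,\infty)$ need not lie in $\U_{\beta,W}(x(T))$), so dynamic programming breaks. The paper's choice $\p\equiv 1$ sidesteps both issues at once, and the integrability condition (IC) is then trivially satisfied.
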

 The rest of the paper is devoted to the proof of Theorem \ref{thm3}.

 \begin{remark}\label{Rpde}   Starting with the assumption that the system is GAC with regulated cost, in the proof below we will explicitly build a  MRF with $p_0\equiv1$,  as   unique continuous viscosity solution of the Hamilton-Jacobi-Bellman equation (in short, HJB) associated with an exit-time problem with vanishing lagrangian.  As is well known, these HJB equations are highly degenerate and have in general multiple solutions, for which the continuity on the target does not propagate to the whole domain (see \cite{MS15} and  \cite{Sor99,Ma04,M04}). The  proof technique  thus consists  in showing the continuity of the solution and establishing an ad hoc comparison principle.  In addition to allowing  us to extend the main result of \cite{S83}  to the case with cost, this technique also provides an alternative approach to obtaining the classical result.   
 \end{remark}

\begin{remark}\label{RTmin}  In this paper we continue the study, begun with \cite{MR13}, aimed at constructing a unified theory, which has as extreme situations asymptotic controllability (with cost $l\equiv0$) on the one hand,  and the minimum time problem (with cost $l\equiv1$) on the other, and for which the $l\ge0$ case represents, in a sense, the intermediate stage.   In the original notion of MRF in \cite{MR13}, $\p$ was a positive constant.  Extending the definition by considering  $p_0$  an increasing function, possibly vanishing at the origin but satisfying the integrability condition  {\rm (IC)},   generalizes the cost bound obtained in \cite{MR13}, as it implies GAC with $\bar W$-regulated cost, where $\bar W(x)=4\,P(W(x)/2)$  for $P$ as in \eqref{P} (see estimate \eqref{P_est} below).\footnote{Actually, refining some estimates, we could likely get $\bar W(x)=P(W(x))$, as a consequence of the fact  that  $\bar W(x)\le (1+2\varepsilon) (P(W(x)/(1+\varepsilon)))$ for every $\varepsilon>0$.} In the special case of the  minimum time problem,  this extension finally provides   a result which is entirely consistent with the existing  literature. Indeed, let $l\equiv1$  and assume that  
 the distance function $\d$ is a  MRF  for some functions $\p$ and  $\gamma$  such that {\rm (IC)} holds true. Then, from the decrease condition \eqref{MRH},   it follows that  $\d$  satisfies 
\begin{equation}\label{Tdiss}
 \min_{u\in U} \langle  \partial_P \d(x) \,,\,f(x,u)\rangle \leq -\tilde\gamma(\d(z))\qquad  \ \text{for all} \   z\in\R^n\setminus\T,  
\end{equation}
where $\tilde\gamma(r):=p_0(r)+\gamma(r)$. As it is well-known, this condition combined with the integrability  property {\rm (IC)} for $\tilde\gamma$ --sometimes called   {\em weak Petrov   condition}--    guarantees the small time local controllability of system \eqref{Egen}  to   $\T$, and implies for the minimum time function $T$  the estimate $T(z)\le\int_0^{\d(z)}(1/\tilde\gamma(r))\,dr$,  in line with our result \eqref{P_est} (see e.g. \cite{CS}). By the expression ``weak", we mean that $\tilde\gamma$ can be 0 at 0, to distinguish it from the classical Petrov condition, in which $\tilde\gamma$ is replaced by a positive constant.   Notice  that, considering only $p_0\equiv \bar p_0>0$  constant, we would have $\tilde \gamma\ge \bar p_0>0$, so our conditions would include just  the ordinary, i.e. non-weak,  Petrov condition, which implies  local Lipschitz continuity of $T$ (see e.g. \cite{BCD}).

Furthermore, we think that considering $\p$ not constant could play a  role in regularizing a MRF,   in the fashion of \cite{R00}.
\end{remark}

 \section{Proof of implication {\rm (i)} $\Longrightarrow$ {\rm (ii)}}\label{s3}
Suppose that  \eqref{Egen}-\eqref{Cgen} is  GAC to $\C$ with $W$-regulated cost, for a cost bound $W$ and a descent rate $\beta$. 
 We   split the proof into several lemmas. In particular, in   Lemma \ref{l1}, we build new functions $\bar\beta$ and $\bar W$ and an admissible triple $(\hat x^0,\hat x,\hat u)$ from $(0,z)$ such that  $\hat u\in\U_{\bar\beta,\bar W}(z)$, for every $z\in\R^n\setminus\C$.  These objects  play  a key role in the construction of a (larger) cost  functional $J$,   in Lemma  \ref{l2}. Furthermore, in Lemmas \ref{l3}, \ref{l4} we show that the  value function $V$ associated with the  control system  \eqref{Egen} and the new cost $J$,  is a continuous MRF.  Lemmas \ref{l1}, \ref{l2} are inspired by \cite[Lemmas 3.8,\,3.17]{S83}, where, however,  no cost is considered and relaxed controls are used. As already observed, differently from \cite{S83},    the present results are formulated in terms of  (explicitly built) new descent rate and cost bound, and  are obtained  by mixing nonsmooth analysis and viscosity  methods, which incidentally allow us to disregard relaxed controls. 
 
 \vsm 
  We begin by introducing some definitions. We consider a bilateral sequence $(r_i)_{i\in\Z}$, given by\,\footnote{ As $\beta^{-1}(r,0)$, we mean the inverse of the strictly increasing function $r'\mapsto r= \beta(r',0)$.}
 \begin{equation}\label{ri1}
r_0:=1, \qquad    
r_{i }:=\min\left\{\beta^{-1}(r_{i-1},0), d^{-1}_{W^+}\left(\frac{1}{4}d_{W^-}(r_{i-1})\right)\right\} \quad \ \text{for all} \   i\in\Z.
\end{equation}
 Clearly,  $(r_i)_{i\in\Z}$  is  positive, strictly decreasing, so that $r_1<1$, and satisfies 
$$\displaystyle\lim_{i\to-\infty}r_i=+\infty, \qquad \displaystyle\lim_{i\to+\infty}r_i=0.$$ 
Hence,  we have
$$\beta(r_{i},0)\le r_{i-1},\quad d_{W^+}(r_i)\leq \frac{1}{4}d_{W^-}(r_{i-1}) \quad \ \text{for all} \   i\in\Z$$
and, consequently, recalling that   $d_{W^-}\le d_{W^+}$  (see \eqref{d-def} and \eqref{d+def}), 
\begin{equation}\label{giterate}
d_{W^+}(r_{i+N})\leq \frac{1}{4^N}d_{W^-}(r_{i})\leq \frac{1}{4^N}d_{W^+}(r_{i}) \quad \ \text{for all} \   i\in\Z,\ \ \text{for all} \   N\in\N, \  N\ge1.
\end{equation}
 For any $i\in\Z$, let 
 \begin{equation}\label{B_i}
 \mathcal B_i:= \{z\in\R^n\setminus\C\mid \  \d(z)\in  [r_i,r_{i-1}]\}, 
\end{equation}
   so that $\displaystyle\R^n\setminus\C=\cup_{i\in\Z} \mathcal B_i$. Finally,  we define the \emph{$i$-th $(\beta,W)$-strip} $\mathcal A_i$, as 
 $$
 \mathcal A_i:=\left\{\begin{array}{l} (x^0,x,u,z)\mid (x^0,x,u) \ \text{admissible triple from $(0,z)$,}  \\
   \qquad \qquad \qquad \qquad \qquad \qquad\qquad u\in\U_{\beta,W}(z),  \ z\in \mathcal B_i
   \end{array}\right\}.
 $$
 
 \begin{lemma} \label{l1}   
There exist a $\K\L$ function $\bar\beta \ge\beta$,  a continuous, unbounded, strictly  increasing map $\Phi:[0,+\infty)\to [0,+\infty)$ with $\Phi(0)=0$, and a function $T:(0,+\infty)\to[0,+\infty)$ with $T(R)=0$ for all $R\le r_1$,\footnote{The value $r_1$ is  defined as in \eqref{ri1}.}
 such that 
for any $z\in \R^n\setminus\T$ there exists an admissible triple $(\hat x^0, \hat x, \hat u)$ from $(0,z)$ enjoying the following properties:
\begin{enumerate}[{\rm (i)}]
\item $\d(\hat x(t))\leq \bar\beta(\d(z),t)$  for all $t\ge0$;
\item $\hat x^0(t)\leq \bar W(z):=\Phi(\d(z))$ for all $t\geq0$;
\item $\d(\hat x(t))\leq \bar\beta(1, t-T(\d(z)))$ for all $t\geq T(\d(z))$.

\end{enumerate}
 \end{lemma}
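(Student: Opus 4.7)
The plan is to obtain $(\hat x^0, \hat x, \hat u)$ by concatenating countably many ``strip-crossing'' pieces, each drawn from $\U_{\beta, W}$ at the current state, and then to exploit the geometric choice of the levels $(r_i)$: the costs will collapse by the $4^{-N}$ decay in \eqref{giterate}, and the distances will be controlled by a $\K\L$-envelope of the piecewise $\beta$-bounds.

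\textbf{Construction.} Fix $z \in \R^n\setminus\C$ and let $i_0 \in \Z$ be such that $z \in \mathcal B_{i_0}$. I build inductively times $0 = t_0 < t_1 < \ldots$, states $z_k$, and controls $u_k$ as follows: set $z_0 := z$; given $z_k$ with $\d(z_k) \le r_{i_0 + k}$, pick $u_k \in \U_{\beta, W}(z_k)$, write $x_k(s) := x(s, u_k, z_k)$, let $s_k > 0$ be the first $s$ at which $\d(x_k(s)) = r_{i_0 + k + 1}$, and set $t_{k+1} := t_k + s_k$, $z_{k+1} := x_k(s_k)$. The strict inequality $\d(x_k(s)) < \beta(\d(z_k), s)$ and $\beta(\cdot, s) \to 0$ ensure $s_k \le \tau_{i_0 + k}^*$, where $\tau_j^*$ is a finite upper bound for the first $s$ at which $\beta(r_{j-1}, s) = r_{j+1}$, obtained from \eqref{ri1} and the $\K\L$-property of $\beta$. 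Setting $\hat u(t) := u_k(t - t_k)$ on $[t_k, t_{k+1})$ produces an admissible triple $(\hat x^0, \hat x, \hat u)$ from $(0,z)$ since $r_{i_0 + k} \to 0$.

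\textbf{Cost bound and the functions $\Phi, T$.} On the $k$-th piece the incremental cost is at most $W(z_k) \le d_{W^+}(r_{i_0+k})$, whence by \eqref{giterate}
$$
\hat x^0(t) \le \sum_{k=0}^{\infty} d_{W^+}(r_{i_0 + k}) \le d_{W^+}(r_{i_0})\sum_{N=0}^{\infty} 4^{-N} = \tfrac{4}{3}d_{W^+}(r_{i_0}) \le \tfrac{4}{3}d_{W^+}(\d(z)).
$$
Using \eqref{Llim}, I take $\Phi$ to be any continuous, strictly increasing, unbounded majorant of $s \mapsto \tfrac{4}{3}d_{W^+}(s)$ with $\Phi(0) = 0$, yielding (ii). For $T$, set $T(R) := \sum_{j = i_0}^{0} \tau_j^*$ when $R \in [r_{i_0}, r_{i_0 - 1}]$ with $i_0 \le 0$, and $T(R) := 0$ when $R \le r_1$; by construction, $\hat x$ reaches the level $\d \le r_0 = 1$ by time $T(\d(z))$, so the portion of $\hat x$ after $T(\d(z))$ coincides with a fresh strip-crossing construction starting from a point at distance $\le 1$, and (iii) will follow from (i) applied to this ``restart''.

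\textbf{The $\K\L$-envelope $\bar\beta$ (main obstacle).} The delicate step is packaging the piecewise information
$$
\d(\hat x(t)) < \beta(\d(z_k), t - t_k) \le \beta(r_{i_0 + k}, t - t_k), \qquad t \in [t_k, t_{k+1}),
$$
into a single $\bar\beta \in \K\L$ with $\bar\beta \ge \beta$ and $\bar\beta(\d(z), t) \ge \d(\hat x(t))$. My plan is to first pass to the coarser bound $\d(\hat x(t)) \le r_{i_0 + k - 1}$, valid on the whole $k$-th piece thanks to the choice of $r_i$; then, using the cumulative time estimate $t_k \le \sum_{j=0}^{k-1}\tau_{i_0+j}^*$, to interpolate continuously in $t$ through the points $(t_k, r_{i_0 + k - 1})$ to produce a strictly decreasing, vanishing function $\sigma_{i_0}(t)$; and finally to combine $\sigma_{i_0}$ with $\beta(\d(z), t)$ on the first piece and smooth jointly in the initial value $r = \d(z)$, so that $r \mapsto \bar\beta(r, t)$ becomes strictly increasing and unbounded. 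The chief obstruction is precisely this joint monotonicity in $r$, which has to be forced by hand because the envelope is built index-by-index on the grid $(r_i)$; once $\bar\beta$ has been arranged, (i) is immediate and (iii) follows from the restart argument above, completing the proof.
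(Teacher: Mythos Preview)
Your strategy is the same as the paper's---concatenate strip-crossing controls along the levels $(r_i)$, sum the costs via the $4^{-N}$ decay in \eqref{giterate}, and wrap the piecewise distance bounds into a $\K\L$ envelope---but you take a more elementary route at the one technical point where the paper works hardest. The paper spends its entire Step~1 on a stability-under-perturbation argument: for each $z\in\mathcal B_i$ it fixes an admissible control, shows that nearby initial points $\bar z$ inherit the desired strip-crossing behavior, and then uses compactness of $\mathcal B_i$ to extract a \emph{finite} cover $Z_i$ and set $T_i:=\max_{z\in Z_i}T_{i,z}$ as the uniform crossing time. You bypass this completely by reading the uniform bound $\tau_j^*$ straight off the $\K\L$ function $\beta$ (the first $s$ with $\beta(r_{j-1},s)=r_{j+1}$), which works because every $u_k\in\U_{\beta,W}(z_k)$ already comes with the bound $\d(x_k(s))<\beta(\d(z_k),s)$. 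What the paper's covering buys is a built-in ``template'' control per strip, making the deterministic nature of the construction---needed for the restart in (iii)---automatic; in your approach you must fix once and for all a selection $z\mapsto u(z)\in\U_{\beta,W}(z)$ so that the tail of $\hat x$ from $z_{k^*}$ literally \emph{is} the fresh construction from $z_{k^*}$. You should say this explicitly.

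Two small slips to clean up. First, your induction hypothesis ``$\d(z_k)\le r_{i_0+k}$'' fails at $k=0$: you only have $\d(z_0)\le r_{i_0-1}$, so the first term in the cost sum is $W(z_0)\le d_{W^+}(\d(z))$, not $d_{W^+}(r_{i_0})$; the final bound $\tfrac{4}{3}d_{W^+}(\d(z))$ is still correct once you treat $k=0$ separately. The same off-by-one affects the coarse distance bound on the first piece (it is $r_{i_0-2}$, not $r_{i_0-1}$), which you already patch by invoking $\beta(\d(z),t)$ there. Second, your definition of $T$ leaves the range $R\in(r_1,r_0]$ (i.e.\ $i_0=1$) uncovered; this is a trivial fill-in. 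The $\bar\beta$ construction is only sketched, but the paper's own argument at that point is equally brief (``approximate the discontinuous $b$ from above by some $\K\L$ function''), so no more is required of you.
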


\begin{proof} 
 {\em Step 1  (Properties of the $(\beta,W)$-strips).}
Fix $i\in\Z$ and let $(u^0,x,u,z)\in\mathcal A_i$.   From the definitions of  $(r_i)_{i\in\Z}$,  $d_{W^-}$, and $d_{W^+}$, it follows that  
\begin{equation}\label{eq1l1}
\d(x(t))< \beta(\d(z),t)\leq \beta (r_{i-1},0)< r_{i-2}\qquad \text{for all }t\geq 0,
\end{equation}
and
\begin{equation}\label{eq2l1}
W(z)\leq d_{W^+}(r_{i-1})\leq \frac{1}{4}d_{W^-}(r_{i-2}).
\end{equation}
Define
$$T_{i,z}:=\inf \left\{t\geq0 \mid \d(x(t))=\frac{r_{i}+r_{i+1}}{2}\right\}.$$
Clearly,  $0<T_{i,z}<T_z(u)$, {where $T_z(u)$ is as in Def. \ref{Admgen}.}
Set
$$\tilde \varepsilon_{i,z}:=\inf\left\{\frac{1}{2}\big(\beta(\d(z),t)-\d(x(t))\big)\mid t\in[0,T_{i,z}]\right\}$$
Note that by the continuity of $\beta$ and of $x$, $\tilde \varepsilon_{i,z}$ is actually a minimum and $\tilde \varepsilon_{i,z}>0$. Also define $\hat \varepsilon_{i}:=\frac{1}{4}d_{W^+}(r_{i-1})$, $ \bar\varepsilon_i:=\frac{r_i-r_{i+1}}{4}$
and, finally, set
$$\varepsilon_{i,z}:=\min\{\tilde \varepsilon_{i,z},\hat \varepsilon_i,\bar \varepsilon_i\}.$$
Since by assumption $f(\cdot,u)$ and $l(\cdot,u)$ are locally Lipschitz continuous, uniformly with respect to $u\in U$, then
 there exists $\delta_{i,z}>0$ such that, for all $\bar z\in \R^n\setminus\T$ satisfying $|\bar z-z|<\delta_{i,z}$,  the cost $x^0(\cdot\,,u,0, \bar z)$ and the trajectory  $x(\cdot\,,u,\bar z)$ are defined on $[0,T_{i,z}]$ and  satisfy
$$|x(t)-x(t,u,\bar z)|\leq \varepsilon_{i,z},\quad |x^0(t)-x^0(t,u,0, \bar z)|\leq \varepsilon_{i,z}\qquad \text{for all } t\in [0,T_{i,z}].$$
Hence,  for all $t\in[0,T_{i,z}]$,   using \eqref{eq2l1},   we get 
\begin{align*}  
x^0(t,u,0,\bar z)&\leq x^0(t)+\varepsilon_{i,z}\leq W(z)+\hat \varepsilon_i \leq W(z)+\frac{1}{4}d_{W^+}(r_{i-1})\\
&\leq \frac{1}{4}d_{W^-}(r_{i-2})+\frac{1}{4}d_{W^+}(r_{i-1}) \leq \frac{1}{2}d_{W^+}(r_{i-2}).
\end{align*}
Moreover, in view of the definition of $\varepsilon_{i,z}$,  we also have  
\begin{align*}
    \d(x(t,u,\bar z))&\leq \d(x(t))+\frac{1}{2}\big(\beta(\d(z),t)-\d(x(t))\big)=\frac{1}{2}\beta(\d(z),t)+\frac{1}{2}\d(x(t))\\
    &<\beta(\d(z),t)\leq \beta(r_{i-1},0)\leq r_{i-2},
\end{align*}
whereas the definition of $T_{i,z}$ implies
\begin{align*}
    \d(x(t,u,\bar z))&\geq \d(x(t))-\bar \varepsilon_{i}\geq \d(x(T_{i,z}))-\bar \varepsilon_i\\
    &\geq \frac{r_i+r_{i+1}}{2}-\frac{r_i-r_{i+1}}{4}=\frac{r_i}{4}+\frac{3}{4}r_{i+1}>r_{i+1},
\end{align*}
and
 \begin{align*}
    \d(x(T_{i,z},u,\bar z))&\leq \d(x(T_{i,z}))+\bar \varepsilon_{i}= \frac{r_i+r_{i+1}}{2}+\frac{r_i-r_{i+1}}{4} \\
    &=\frac{3}{4}r_i+\frac{1}{4}r_{i+1}<r_{i}.
\end{align*}
Summarizing the above results, we can conclude that, for every $i\in\Z$ and $z\in \mathcal B_i$ ($\mathcal B_i$ as in \eqref{B_i}),  with which we can consider an element  $(x^0,x,u,z)\in \mathcal A_i$ to be associated,
 by the axiom of choice,  there exists $\delta_{i,z}>0$ such that, for all $\bar z\in \R^n\setminus \C$ with $|z-\bar z|<\delta_{i,z}$, one has
\begin{enumerate}[(a)]
    \item $\d(x(t,u,\bar z))\in (r_{i+1},r_{i-2})$ for all $t\in[0,T_{i,z}]$;
    \item $\d(x(T_{i,z},u,\bar z))\in (r_{i+1},r_i)$, i.e. $x(T_{i,z},u,\bar z)\in \mathring{\mathcal B}_{i+1}$;
    \item $x^0(T_{i,z},u,\bar z)\leq \frac{1}{2}d_{W^+}(r_{i-2})$.
\end{enumerate}
\noindent{\em Step 2 (Construction of a suitable admissible triple)}  
Preliminarily, observe that, since $\partial \T$ is compact, for every  $i\in\Z$ the set  $\mathcal B_i$
 is compact. Therefore, the cover of $\mathcal B_i$ given by the open balls $\mathring B_{\delta_{i,z}}(\{z\})$, $z\in \mathcal B_i$, admits a finite subcover corresponding to the points $z\in Z_i$,  for some finite subset $Z_i$ of $\mathcal B_i$. Fix a positive bilateral  sequence $(T_i)_{i\in\Z}$ such that
\begin{equation}\label{T_i}
 T_i\ge\max\{T_{i,z}\mid z\in Z_i\}, \quad  \qquad \sum_{j=0}^\infty T_{i+j}=+\infty, \quad \text{for every $i\in\Z$.} 
\end{equation}
 Furthermore, thanks to the properties of the bilateral sequence $(r_i)_{i\in\Z}$, we can define  the map $i:(0,+\infty)\to\Z$, given by
\begin{equation}\label{def_i}
i(r)=i \qquad \text{if $r\in(r_i,r_{i-1}]$.}
\end{equation}
Fix now $\bar z\in \R^n\setminus \T$ and let $i:=i(\d(\bar z))$. Then, $\bar z\in \mathring B_{\delta_{i,z_{_0}}}(\{z_{_0}\})$ for some $z_{_0}\in Z_i\subset \mathcal B_i$. Let $(x^0_{_0},x_{_0},u_{_0},z_{_0})\in \mathcal A_i$ be the associated process from $(0,z_{_0})$. Since 
 $|\bar z-z_{_0}|<\delta_{i,z_{_0}}$,   from Step 1 it follows that  $\hat x^0:=(\cdot\,,u_{_0},0,\bar z)$ and $\hat x:=x(\cdot\,,u_{_0},\bar z)$ are defined on the interval $[0,\hat t_{_0}]$,  $\hat t_{_0}:=T_{i,z_{_0}}\leq T_i$, and satisfy
\begin{enumerate}[(a.0)]
    \item $\d(\hat x(t))\in (r_{i+1},r_{i-2})$\  for all $t\in[0,\hat t_{_0}]$;
    \item $\hat x(\hat t_{_0})\in \mathring{\mathcal B}_{i+1}$;
    \item $\hat x^0(\hat t_{_0})\leq \frac{1}{2}d_{W^+}(r_{i-2})$.
\end{enumerate}
 Repeating the above procedure with the initial conditions $\bar z_1:=\hat x(\hat t_0)\in \mathcal B_{i+1}$ and  $\bar c_1:=\hat x_0(\hat t_0)$,    we get the existence of an admissible control $u_1\in \U_{\beta,W}(\bar z_1)$ and of a time $\hat t_1\leq T_{i+1}$,  such that
  extending
  $\hat x^0$ and  $\hat x$  by setting $\hat x^0(t)=x^0(t-\hat t_{_0},u_1, \bar c_1, \bar z_1)$ and $\hat x(t)=x(t-\hat t_{_0}, u_1,\bar z_1)$ for $t\in[\hat t_{_0},\hat t_{_0}+\hat t_1]$, respectively,  one has
\begin{enumerate}
    \item[(a.1)] $\d(\hat x(t))\in (r_{i+2},r_{i-1})$ for all $t\in[\hat t_{_0},\hat t_{_0}+\hat t_1]$;
    \item[(b.1)] $\hat x(\hat t_{_0}+\hat t_1)\in \mathring{\mathcal B}_{i+2}$;
    \item[(c.1)] $\hat x^0(\hat t_{_0}+\hat t_1)\leq \frac{1}{2}d_{W^+}(r_{i-2})+\frac{1}{2}d_{W^+}(r_{i-1})\leq \frac{1}{2}d_{W^+}(r_{i-2})+\frac{1}{8}d_{W^+}(r_{i-2}).$
\end{enumerate}
Of course, setting $\hat u(t):=u_{_0}(t)\chi_{_{[0,\hat  t_{_0}]}}(t)+ u_1(t-\hat t_{_0})\chi_{_{(\hat  t_{_0},\hat  t_{_0}+\hat  t_1 ]}}(t)$, we have  $\hat x^0=x(\cdot\,,\hat u, 0,\bar z)$ and  $\hat x=x(\cdot\,,\hat u,\bar z)$. In this way,  we can recursively construct  sequences of 
controls  $(u_N)_{N\in \N}$ and  times $(\hat t_{N})_{N\in\N}$ with $\hat t_{N}\leq T_{i+N}$ for all $N$, such that,  setting
$$\hat T_{-1}:=0,\quad \hat T_{N}:=\sum_{j=0}^N \hat t_{j},\quad \hat T_{\infty}:=\sum_{j=0}^{+\infty} \hat t_{j}$$
and
$$\hat u(t):=u_N(t-\hat T_{N-1})\qquad \text{for all }t\in(\hat T_{N-1},\hat T_{N}]$$
we have $u\in \mathcal M([0,\hat T_{\infty}),U)$ and the corresponding solution $(\hat x^0,\hat x)$ from $(0,\bar z)$ is defined on  the whole interval $[0,\hat T_{\infty})$ and satisfies
\begin{enumerate}
    \item[(a.N)] $\d(\hat x(t))\in (r_{i+N+1},r_{i+N-2})$ for all $t\in[\hat T_{N-1},\hat T_{N}]$;
    \item[(b.N)] $\hat x(\hat T_{N})\in \mathring{\mathcal B}_{i+N+1}$;
    \item[(c.N)] $\hat x^0(\hat T_{N})\leq \hat x_0(\hat T_{N-1})+\frac{1}{2}d_{W^+}(r_{i+N-2})\leq C d_{W^+}(r_{i-2}),$
    for $C:=\frac{1}{2}\sum_{j=0}^\infty \frac{1}{4^j}$. 
\end{enumerate}
  (The latter inequality can be easily proved by induction.) From these relations, it  follows immediately  that $\hat T_\infty=T_z(\hat u)$,  $\lim_{t\to \hat T_\infty^-}\d(\hat x(t))=0$, and $\hat x^0(\hat T_\infty)\le C d_{W^+}(r_{i-2})$ (actually, even if $\hat T_\infty=+\infty$). Hence, if in case  $\hat T_\infty<+\infty$ we extend the process $(\hat x^0,\hat x,\hat u)$ to $[0,+\infty)$ by setting $\hat u(t)=w$ ($w\in U$ arbitrary), and $(\hat x^0,\hat x)(t)=\lim_{t\to \hat T_\infty^-}(\hat x^0,\hat x)(t)$ for any $t\ge \hat T_\infty$, we can conclude that  $(\hat x^0,\hat x,\hat u)$ is an admissible triple from $(0,\bar z)$.

\vsm  
\noindent {\em Step 3  (Construction of    $\bar \beta$, $\Phi$,  and  $T$).} 
 For all $i\in \Z$ and   $N\in \N$,   set
$$\bar T_{i,-1}:=0,\qquad \bar T_{i,N}:=\sum_{j=0}^N T_{i+j},$$
where the times $T_{i}$ are as in \eqref{T_i}, so that
 for any $i\in\Z$, $\bar T_{i,N}\to+\infty$ as $N\to +\infty$. Then, we  define the function $T:(0,+\infty)\to[0+\infty)$,  as
$$
T(R):=\bar T_{i(R),-1\vee (-i(R)+1)} \qquad \ \text{for all} \   R>0,
$$
where  $i(\cdot)$ is as in \eqref{def_i}. Note  that if $R\le r_1<1$ then $i(R)\geq 2$ and this implies $T(R)=\bar T_{i(R),-1}=0$. 
Since  $d_W^+$ as well as  $R\mapsto r_{i(R)}$ are increasing functions,  the composition   $R\mapsto \,d_W^+(r_{i(R)-2})$  is a positive,  piecewise constant, increasing function,  such that $d_W^+(r_{i(R)-2})\to0$ as  $R\to 0^+$ and $d_W^+(r_{i(R)-2})\to+\infty$ as $R\to +\infty$. There is thus a continuous, strictly increasing approximation  from above  $\Phi:[0,+\infty)\to[0,+\infty)$ of this composition times $C$ ($C$ as in (c.N) above), vanishing at  zero and unbounded, namely
 $$
C\, d_W^+(r_{i(R)-2})\le \Phi(R) \qquad\ \text{for all} \   R>0.
 $$
Finally, we introduce the  function $b:[0,+\infty)\times[0,+\infty)\to[0,+\infty)$,  given by
$$\begin{cases}
b(R,t):=r_{i+N-2}& \text{if } (R,t)\in {(r_i,r_{i-1}]} \times [\bar T_{i,N-1},\bar T_{i,N})\quad \ \text{for all} \   i\in\Z, \ N\in\N\\
b(0,t):=0& \ \text{for all} \   t\geq 0.
\end{cases}
$$
Note that $b(\cdot,t)$ is increasing and $b(R,t)\to +\infty$ as $R\to +\infty$, for all $t\geq 0$. Similarly, $b(R,\cdot)$ is positive, decreasing and $b(R,t)\to 0$ as $t\to +\infty$ for all $R> 0$. Using e.g.  a linear interpolation procedure, it is not difficult to show that the discontinuous function  $b$ can be approximated from above by some $\mathcal{KL}$ function $\bar \beta$, which is  $\ge\beta$ by construction. 

\noindent Let $\bar z\in \R^n\setminus \C$ and set $i:=i(\d(\bar z))$. 
Then, since $\hat T_N\le\bar T_{i,N}$ for all $N\ge-1$,  the admissible triple $(\hat x^0,\hat x,\hat u)$  from $(0,\bar z)$ built in Step 2,  satisfies
\begin{equation}\label{hat_x}
\d(\hat x(t))<r_{i+N-2}\cdot\chi_{_{[\bar T_{i,N-1},\bar T_{i,N})}}(t)\leq b(\d(\bar z),t)\le\bar \beta(\d(\bar z),t) \qquad \ \text{for all} \   t\geq 0, \end{equation}
and
$$ \hat x^0(t)\le \bar W(\bar z):=\Phi(\d(\bar z)) \qquad\ \text{for all} \   t\ge0,
$$
so that $\hat u\in\U_{\bar\beta,\bar W}(\bar z)$.  Notice that $\bar W:\overline{\R^n\setminus\T}\to[0,+\infty)$ is a continuous, proper and positive definite function, in view of the properties of $\Phi$ and $\d$. This concludes the proof of  statements (i) and (ii). 

\noindent In order to prove (iii), let us first suppose   $\d(\bar z)\le r_1<r_0=1$. In this case, $i(\d(\bar z))\ge2$ and $T(\d(\bar z))=\bar T_{i(\d(z)),-1}=0$. Hence (iii) follows from \eqref{hat_x}, because $\bar\beta(\cdot, t)$ is strictly increasing for every $t\ge0$, so that   
$$
\d(\hat x(t))< \bar\beta( \d(\bar z),t)\le \bar\beta(1,t)=\bar\beta(1,t-T(\d(\bar z))) \quad\text{ for all $t\ge T(\d(\bar z)=0$.}
$$
 If instead $\d(\bar z)> r_1$,  we have $i:=i(\d(\bar z))\le 1$.  Set
  $N:=-i(\d(z))+2$ \, ($\ge1$) and   $\bar z_N:=\hat x(\hat T_{N-1})$, so that,  by property (b.N) of Step 2,  $r_2<\d(\bar z_N)<r_1<r_0=1$.   Note that,  applying  the above construction   from the initial condition $(0,\bar z_N)$, the obtained admissible triple, say $(\hat x^0 _N, \hat x_N,\hat u_N)$ from $(0,\bar z_N)$, satisfying
  $$
  \d(\hat x_N(s))<\bar\beta(\d(\bar z_N),s), \quad \hat x^0_N(s)\le\Phi(\d(\bar z_N))   \quad\ \text{for all} \   s\ge0,
  $$
  is simply given by 
  $$
  \hat u_{N}(\cdot)=\hat u(\cdot\,+\hat T_{N-1}) , \quad \hat x^0_{ N}=x^0(\cdot\,, \hat u_{ N}, 0,\bar z_{N}), \quad \hat x_{ N}=x(\cdot\,, \hat u_{N}, \bar z_{ N}).
  $$ 
  This fact is crucial for property (iii) to hold. Indeed, using the monotonicities  of $\bar\beta$ and the inequality $\hat T_{N-1}\le\bar T_{i,N-1}=T(\d(\bar z))$,       it implies that
  \begin{align*}
\d(\hat x(t))&=\d(\hat x_{N}(t- \hat T_{N-1}))<\beta(\d(\bar z_N),t- \hat T_{N-1})\\
&<\bar \beta(1,t-\hat T_{N-1})\le \bar \beta(1,t- T(\d(\bar z))) \qquad\text{ for all $t\ge T(\d(\bar z)).$}
\end{align*}
\end{proof}
 
\begin{lemma}\label{l2}
There exist two continuous,  strictly increasing functions  $\ell$, $\Psi:[0,+\infty)\to [0,+\infty)$, with $\ell(0)=0$, $\displaystyle \lim_{R\to+\infty}\ell(R)= +\infty$, such that the functional $J: (\R^n\setminus \C)\times \M([0,+\infty),U)\to[0,+\infty)\cup\{+\infty\}$,  defined as 
$$
J(z,u):=\left\{\begin{array}{l}\displaystyle \int_0^{T_z(u)} \left[\ell(\d(x(t,z,u)))+l(x(t,z,u),u(t))\right]dt \qquad\qquad \text{if $u\in\U(z)$,} \\
+\infty \qquad\qquad\qquad\qquad\qquad\qquad\qquad \text{if $u\in \M([0,+\infty),U)\setminus\U(z)$,}
\end{array}\right.
$$
enjoys  the properties {\rm (i)--(v)} below.

\noindent  For every $z\in \R^n\setminus \C$,  let $(\hat x^0, \hat x,\hat u)$ be the  admissible  triple from $(0,z)$ built in Lemma \ref{l1}. Using  the notations of Lemma \ref{l1}, we have
\begin{enumerate}[{\rm (i)}]
    \item $J(z,\hat u)<+\infty$;
    \item if $\d(z)<r_1$, then $J(z,\hat u)\leq \bar\beta(\d(z),0)+\Phi(\d(z))$;
    \item for any $u\in \mathcal M([0,+\infty),U)$ such that  $J(z, u)\leq J(z,\hat u)$, ($u\in\U(z)$ and)  $\d(x(t, u,z))\leq \Psi(\d(z))$ for all $t\geq 0$;
    \item for all $\alpha>0$ there exists $\Theta>0$ such that  for any   $u\in \mathcal M([0,+\infty),U)$ with $J(z,u)<\alpha$, then  $\d(z)\leq\Theta$;
    \item for all $\alpha>0$ there exists $\delta>0$ such that, if $\d(z)>\alpha$,  then $J(z,u)>\delta$ for all $u\in \mathcal M([0,+\infty),U)$.
\end{enumerate}
\end{lemma}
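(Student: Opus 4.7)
The plan is to define $J$ by adding to the running cost $l(x,u)$ a purely state-dependent penalty $\ell(\d(x))$, with $\ell:[0,+\infty)\to[0,+\infty)$ continuous, strictly increasing, $\ell(0)=0$, $\ell(+\infty)=+\infty$. The function $\ell$ must be small enough near $0$ so that $\int_0^{+\infty}\ell(\bar\beta(R,t))\,dt$ is finite (and bounded by $\bar\beta(R,0)$ when $R\le r_1$), yet large enough at infinity that long excursions of the trajectory pay a disproportionate price. Using the standard Sontag $\K\L$-representation, one obtains continuous, strictly increasing $\alpha_1,\alpha_2:[0,+\infty)\to[0,+\infty)$ with $\alpha_i(0)=0$, $\alpha_i(+\infty)=+\infty$ and $\bar\beta(R,t)\le\alpha_1(\alpha_2(R)e^{-t})$, and takes $\ell(s):=(\alpha_1^{-1}(s))^2$ (smoothed if necessary), so that $\int_0^{+\infty}\ell(\bar\beta(R,t))\,dt\le\alpha_2(R)^2/2=:F(R)$; by further shrinking $\alpha_2$ on $(0,r_1]$ one ensures $F(R)\le\bar\beta(R,0)$ there. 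Setting $M(R):=\sup\{|f(x,u)|:\d(x)\le R,\,u\in U\}$, which is finite thanks to the compactness of $\partial\T$ (either $\R^n\setminus\T$ is bounded, so $M$ is globally bounded, or $\T$ is bounded and $\{\d\le R\}$ is bounded), one also modifies $\ell$ at large arguments so that $\ell(R)\,R/M(R)\to+\infty$ as $R\to+\infty$.

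Properties (i) and (ii) follow directly from Lemma \ref{l1}: since $\d(\hat x(t))\le\bar\beta(\d(z),t)$ and $\hat x^0(t)\le\Phi(\d(z))$, one has $J(z,\hat u)\le F(\d(z))+\Phi(\d(z))<+\infty$, and when $\d(z)<r_1$ the sharper calibration gives $J(z,\hat u)\le\bar\beta(\d(z),0)+\Phi(\d(z))$.

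For (iii), fix $u$ with $J(z,u)\le J(z,\hat u)$; then $u\in\U(z)$, otherwise $J(z,u)=+\infty$. Write $\rho(t):=\d(x(t,u,z))$ and suppose for contradiction that $\rho$ reaches some value $R>\d(z)$ at some time; taking $\tau$ to be the first such time, one has $\rho\le R$ on $[0,\tau]$ and hence $|\dot\rho|\le M(R)$ there. If $\tau^*$ denotes the last time in $[0,\tau]$ at which $\rho=(R+\d(z))/2$, then $\rho>(R+\d(z))/2$ on $(\tau^*,\tau]$, and the velocity bound yields $\tau-\tau^*\ge(R-\d(z))/(2M(R))$; hence
\[
J(z,u)\ge\ell\bigl((R+\d(z))/2\bigr)\cdot\frac{R-\d(z)}{2M(R)}.
\]
By the growth of $\ell/M$, the right-hand side exceeds $F(\d(z))+\Phi(\d(z))$ once $R>\Psi(\d(z))$, for a continuous, strictly increasing $\Psi$ obtained by inverting the estimate at the threshold $F(\d(z))+\Phi(\d(z))$; since $J(z,u)\le F(\d(z))+\Phi(\d(z))$, one concludes $\rho(\tau)\le\Psi(\d(z))$. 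Property (iv) is obtained analogously, with the threshold $\alpha$ replacing $F(\d(z))+\Phi(\d(z))$.

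For (v), assume $u\in\U(z)$ (else $J=+\infty>\delta$) and $\d(z)>\alpha$. The times $t_1:=\inf\{t\ge0:\rho(t)=\alpha/2\}$ and $t_0:=\sup\{t\in[0,t_1]:\rho(t)=\alpha\}$ are finite and satisfy $\rho(t)\in[\alpha/2,\alpha]$ on $[t_0,t_1]$. Since $|\dot\rho|\le M(\alpha)$ on this interval and $\rho(t_0)-\rho(t_1)=\alpha/2$, one has $t_1-t_0\ge\alpha/(2M(\alpha))$, and consequently
\[
J(z,u)\ge\int_{t_0}^{t_1}\ell(\rho(t))\,dt\ge\ell(\alpha/2)\cdot\frac{\alpha}{2M(\alpha)}=:\delta,
\]
independent of $z$ and $u$. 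The main obstacle is the simultaneous calibration of $\ell$: small enough near $0$ for integrability (with the sharp bound by $\bar\beta(\cdot,0)$ on $(0,r_1]$ needed for (ii)) yet sufficiently super-linear at infinity to dominate the possibly super-linear local velocity $M$, which requires a careful construction gluing the two regimes.
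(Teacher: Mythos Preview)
Your overall strategy matches the paper's: augment $l$ by a state penalty $\ell(\d(x))$, calibrated to be integrable along the reference trajectory $\hat x$ yet so expensive at large distances that no near-optimal trajectory can wander far. However, the ``main obstacle'' you flag at the end is not resolved in your sketch, and it causes a genuine gap in your proof of (iii). You first take $\ell=(\alpha_1^{-1})^2$ to obtain the bound $J(z,\hat u)\le F(\d(z))+\Phi(\d(z))$, and only afterwards \emph{increase} $\ell$ at large arguments to force $\ell(R)\,R/M(R)\to+\infty$. But for large $\d(z)$ the trajectory $\hat x$ visits distances up to $\bar\beta(\d(z),0)$, which is unbounded in $\d(z)$; hence after the modification the estimate $J(z,\hat u)\le F(\d(z))+\Phi(\d(z))$ is no longer valid, yet you invoke exactly this invalidated bound in the contradiction step of (iii). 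A secondary issue: in your crossing estimate $\ell$ is evaluated at $(R+\d(z))/2\sim R/2$ while $M$ is evaluated at $R$, so the growth condition you actually need is $\ell(R/2)\,R/M(R)\to+\infty$, which is strictly stronger when $M$ is, say, exponential.

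The paper breaks this circularity by a \emph{recursive layered} construction rather than a single modification. Starting from $\ell_1(R)=R\,e^{-\tau(R)}$ (with $\tau$ the inverse of $t\mapsto\bar\beta(1,t)$), it defines $\ell_{j+1}$ from $\ell_j$ by inflating only on the annulus $[\bar\beta(j,0)+1,\bar\beta(j,0)+2]$, with the inflation factor chosen so that merely crossing that annulus already costs more than the \emph{already computed} bound $L_j+\Phi(j)\ge J_j(z,\hat u)$ for $\d(z)\le j$. The decoupling works because $\d(\hat x(t))<\bar\beta(j,0)$ for all $t$ whenever $\d(z)\le j$, so the $(j{+}1)$-st modification leaves $J(z,\hat u)$ untouched for those $z$; thus $J(z,\hat u)=J_j(z,\hat u)\le L_j+\Phi(j)$ survives to the limit $\ell$. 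Lemma~\ref{l1}(iii) is what makes $L_j$ explicit and uniform over $\d(z)\le j$: after the fixed time $T(j)$ one has $\d(\hat x(t))\le\bar\beta(1,t-T(j))$, so the tail integral reduces to $\int_0^\infty\bar\beta(1,s)e^{-s}\,ds\le\bar\beta(1,0)$ independently of $z$. Your one-shot modification can be salvaged by recomputing an a~posteriori bound $G(r):=\int_0^\infty\ell(\bar\beta(r,t))\,dt+\Phi(r)$ \emph{after} the modification and basing $\Psi$ on $G$ rather than $F$, but this is not what you wrote, and it requires checking that the crossing cost still dominates the (now larger) $G$.
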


\begin{proof}
Let $\bar\beta$ be a  $\mathcal {KL}$ function as in Lemma \ref{l1}.  Extend the continuous strictly decreasing function $[0,+\infty)\ni t\mapsto \bar\beta(1,t)$ to a  continuous strictly decreasing function defined on $\R$ and tending to $+\infty$ as $t\to-\infty$. Let $\tau:(0,+\infty)\to\R$ be the inverse of this map, so that,  in particular, $\tau$ is continuous, strictly decreasing,  $\tau(R)\to +\infty$ as $R\to 0^+$ and $\tau(R)\to -\infty$ as $R\to +\infty$. 
    
\medskip
\emph{Step 1. (Construction of a function $\ell_1$)}
Define the  continuous, strictly increasing, and unbounded  function $\ell_1:[0,+\infty)\to[0,+\infty)$, given by
$$
\ell_1(R):=Re^{-\tau(R)} \quad\text{for all $R>0$,} \qquad      \ell_1(0):=0.
$$
For any    $0<b<c$,  we claim that there exists a function {$\varkappa(b,c)$} such that, if $z_1,z_2\in \R^n\setminus \C$, $u\in \mathcal M([0,+\infty),U)$, and  $T>0$ satisfy $z_2=x(T,u,z_1)$ and $b\leq \d(x(t,u,z_1))\leq c$ for all $t\in[0,T]$,  then
\begin{equation}\label{gamma}
\int_0^T \ell_1(\d(x(t,u,z_1)))dt\geq \varkappa(b,c)|z_1-z_2| \ge {\varkappa(b,c)|\d(z_1)-\d(z_2)|.}
\end{equation}
Indeeed, if $z_1=z_2$, \eqref{gamma} is trivially satisfied. If instead $z_1\ne z_2$,  then $\bar M(b,c):=\max\{ |f(x,u)|\mid b\leq \d(x)\leq c,\, u\in U\}>0$ and $|z_1-z_2|{\leq } T \bar M(b,c)$. Setting $\varkappa(b,c):=\ell_1(b)\bar M(b,c)^{-1}$, since $\ell_1$ is increasing, we have $\ell_1(\d(x(t,u,z_1)))\geq \ell_1(b)$ for all $t\in[0,T]$, and  consequently
$$\int_0^T \ell_1(\d(x(t,u,z_1)))dt\geq T\ell_1(b)\geq \varkappa(b,c){|z_1-z_2|}.$$

\medskip
\emph{Step 2 (Recursive definition of a   sequence  $\ell_j$)}
  Starting from $\ell_1$ introduced in Step 1, for every $j\geq 1$ we will  recursively define an increasing sequence of functions $\ell_j:[0,+\infty)\to\R$ with the following property:
\begin{equation}\label{Nj}
0\le R\leq \bar\beta(j,0) \quad \Longrightarrow \quad \ell_{j+1}(R)=\ell_j(R).
\end{equation}
Then, we will prove that the pointwise limit $\displaystyle\ell:=\lim_{j\to+\infty} \ell_j$ (finite,  because of \eqref{Nj}) together with the map $\Psi$ defined below,  satisfies {\rm (i)--(v)}. Further, all the functions $\ell_j$, (hence $\ell$ itself) will be continuous.  We begin by assuming that, for some fixed $j\ge 2$, we already defined the functions $\ell_i$ for all $i\le j$,  satisfying \eqref{Nj} for all $i=1,\dots, j-1$. Given $z\in \R^n\setminus \C$ and any control $u\in\U(z)$, let us set
$$J_j(z,u):=\int_0^{T_z(u)}\ell_j(\d(x(t,u,z)))+l(x(t,u,z),u(t))dt.$$
Observe that, if we consider the admissible triple $(x^0,x,u)$ from $(0,z)$ defined on $[0,+\infty)$ (even in case $T_z(u)<+\infty$, as in Def. \ref{Admgen}),  we can equivalently write
$$
J_j(z,u)= \int_0^{+\infty}\ell_j(\d(x(t)))+\lim_{t\to T^-_z(u)} x^0(t),
$$
as $\d( x(t))=0$ for all $t\ge T_z(u)$, whenever $T_z(u)<+\infty$.
Assume $\d(z)\leq j$.  Let $T(\cdot)$ and  $(\hat x^0, \hat x,\hat u)$ be the uniform time and the admissible triple from    $(0,z)$ built in Lemma \ref{l1}, respectively.  Then,  we have 
$$\d(\hat x(t))\leq \bar\beta(1, t-T(j))\leq  \bar \beta(1,0) \qquad \text{for all $t\geq T(j)$.}
$$
In particular,   \eqref{Nj} (together with the fact that $\bar\beta(\cdot,0)$ is increasing) implies
$$\ell_j(\d(\hat x(t)))=\ell_1(\d(\hat x(t)))\qquad \ \text{for all} \   t\geq T(j).$$
If otherwise $t\leq T(j)$, we get
$$\d(\hat x(t))<\bar\beta(\d(z),0)\leq \bar\beta(j,0).$$
Hence, we have
\begin{align*}
    \int_0^{+\infty}\ell_j&(\d(\hat x(t)))dt = \int_0^{T(j) }\ell_j(\d((\hat x(t))))+ \int_{T(j)}^{+\infty}\ell_j(\d(\hat x(t)))dt\\
    &\leq \ell_j(\bar\beta(j,0))T(j)+ \int_{T(j)}^{+\infty}\ell_1(\d(\hat x(t)))dt\\
    &=   \ell_j(\bar\beta(j,0))T(j)+ \int_{T(j)}^{+\infty}\d(\hat x(t))e^{-t(\d(\hat x(t)))}dt\\
    &\le \ell_j(\bar\beta(j,0))T(j)+ \int_{T(j)}^{+\infty}\bar\beta(1,(t-T(j)))\,e^{-\tau(\bar\beta(1,(t-T(j)))}dt\\
    &\leq  \ell_j(\bar\beta(j,0))T(j)+ \bar\beta(1,0)\int_{T(j)}^{+\infty}e^{-t+T(j)}dt\\
    &=  \ell_j(\bar\beta(j,0))T(j)+ \bar\beta(1,0):= L_j.   
\end{align*}
Therefore, from Lemma \ref{l1},\,(ii)  it follows that 
\begin{equation}
    \label{Rjest}
    J_j(z,\hat u)\leq L_j+\Phi(j) \quad \text{for all } z\in\R^n\setminus\C \text{ such that } \d(z)\leq j.
\end{equation}
Now,  set $\varkappa_j:=\varkappa( \bar\beta(j,0)+1, \bar\beta(j,0)+2)$
    and consider a continuous function $\rho_j:[0,+\infty)\to[0,+\infty)$ such that
    $$\rho_j(R)=\begin{cases}
        0 &\text{ if $R\le \bar\beta(j,0)$ or $R\ge \bar\beta(j,0)+3$},\\
        \dfrac{ L_j+\Phi(j)}{\varkappa_j}&  \text{ if $ \bar\beta(j,0)+1\le R\le  \bar\beta(j,0)+2$.}
    \end{cases}
    $$
Finally, define
$$\ell_{j+1}(R):=(1+\rho_j(R))\ell_j(R) \qquad \text{for all $R\ge0$.}
$$
Clearly, $\ell_{j+1}$ satisfies \eqref{Nj}. 

\medskip \emph{Step 3. (Definition of  $\ell$ and $\Psi$)}
As anticipated above, we define $\ell$ as the pointwise limit of the increasing sequence $(\ell_j)$. Incidentally notice that, by construction, $\ell$ is continuous and $\ell(R)\geq \ell_{j}(R)$ for all $R\in[0,+\infty)$ and all $j\geq 1$. Furthermore, let  $\Psi:[0,+\infty)\to[0,+\infty)$ be any continuous strictly increasing function, satisfying
$$
{R\in [j-1,j]}\quad \Rightarrow \quad \Psi(R)\geq \bar\beta(j,0)+2 \quad \ \text{for all} \   j\geq 1.
$$ We show that $\ell$ and $\Psi$ have the required properties. Let  $z\in \R^n\setminus \C$ be given.

In order to prove (i), it suffices to observe that there exists an integer $j\ge 1$ such that $j-1< \d(z)\leq j$. Then,   
 \eqref{Rjest} implies that 
\begin{equation}\label{J_j}
J(z,\hat u)=J_j(z,\hat u)\leq L_j+\Phi(j)<+\infty.
\end{equation}

Let us now prove (ii). Assume   $\d(z)\leq r_1<1$. Then,   $T(\d(z))=0$, $\ell(\d(\hat x(t)))=\ell_1(\d(\hat x(t)))$ for all $t\geq 0$, and, arguing as in Step 2,  we obtain
\begin{align*}
    J(z,\hat u)&=\int_0^{+\infty}\left[ \ell_1(\d(\hat x(t))+l(\hat x(t),\hat u(t))\right]dt
    \leq\int_0^{+\infty} \bar \beta(\d(z),t)\, e^{-t}\,dt + \Phi(\d(z))\\
    &\le \bar \beta(\d(z),0)+\Phi(\d(z)) \qquad \text{for all $t\geq 0$.}
    \end{align*}
    
    To prove (iii), consider $u\in \mathcal M([0,+\infty),U)$  satisfying $J(z,u)\leq J(z,\hat u)$. Actually,  $u\in\U(z)$, so let   $(x^0,x,u)$ be the corresponding admissible triple from $(0,z)$. As above, let $j$ be the integer $\ge 1$ such that $j-1<\d(z)\le j$. By the properties of $\bar\beta$,  we have $\bar\beta(j,0)>j$, so that   $\d(z)<\bar\beta(j,0)$. In contradiction to claim  (iii),  suppose that there exists $t>0$ such that 
    $$
    \d(x(t))>\Psi(\d(z))>\bar\beta(j,0)+2.
    $$
     Then,  there exist $0<t_1<t_2<T_z(u)$ such that  
    \begin{equation}\label{t_1t_2}
    \begin{array}{l}
    \d(x(t_1))= \bar\beta(j,0)+1, \qquad \d(x(t_2))=\bar\beta(j,0)+2, \\[1.5ex]
    \bar\beta(j,0)+1\leq \d(x(t))\leq\bar\beta(j,0)+2\qquad \ \text{for all} \   t\in[t_1,t_2].
    \end{array}
    \end{equation}
    Therefore, in view of  \eqref{J_j},  \eqref{gamma}, and the definition of $(\ell_j)_j$,  we have
    \begin{align*}
       J(z,u)&=\int_0^{T_z(u)} \left[\ell(\d(x(t)))+l(x(t),u(t))\right]dt 
       \geq \int_0^{T_z(u)} \ell(\d(x(t)))dt\\
       &> \int_{t_1}^{t_2} \ell_{j+1}(\d(x(t)))dt 
      \geq (1+\rho_j(\bar\beta(j,0)+1))\int_{t_1}^{t_2} \ell_1 (\d(x(t)))dt\\
       &\geq (1+\rho_j(\bar\beta(j,0)+1))\varkappa_j=  L_j+\Phi(j)
       \geq J_j(z,\hat u)=J(z,\hat u).
    \end{align*}
    This provides the required contradiction and the proof of (iii) is complete.

  As in claim (iv),  let us now suppose that $u\in\U(z)$ is a control satisfying $J(z,u)<\alpha$, for some $\alpha>0$. Let  $(x^0,x,u)$ be the corresponding admissible triple from $(0,z)$, and let $j\ge1$ be the smallest integer  such that $L_j+\Phi(j)>\alpha$. We want to show  that 
$$
\d(z)<\Theta:=\bar\beta(j,0)+2. 
$$
Indeed, assume instead that $\d(z)\geq \bar\beta(j,0)+2$. If there exists a time $t\ge0$ such that  $\d(x(t))<c:=\bar\beta(j,0)+1$, then there are  $0<t_1<t_2<T_z(u)$ as in \eqref{t_1t_2}. Thus,  arguing as in the proof of claim (iii) we can deduce the inequality $J(z,u)> L_j+\Phi(j)>\alpha$, in contradiction with the hypothesis $J(z,u)<\alpha$.  If instead $\d(x(t,z,u))\geq c$ for all $t\leq T_z(u)$, then $T_z(u)=+\infty$ and we get the contradiction
    $$J(z,u)\geq \int_0^{+\infty} \ell_1(\d(x(t)))dt\geq \int_0^{+\infty} \ell_1(c)dt=+\infty. $$

  Let finally prove (v).  Let $\alpha>0$ and assume $\d(z)>\alpha$. For any  $u\in \mathcal M([0,+\infty),U)\setminus\U(z)$, the cost $J(z,u)=+\infty$, so (v) is trivially true. If $u\in\U(z)$,  by the last part of the proof of (iv) (for $c:=\alpha/2$) there exists a positive, finite time $t_2:=\inf\{t\in[0,T_z(u))\mid \d(x(t,u,z))\leq \alpha/2\}$. Let $0<t_1<t_2$ be a time such that $\alpha/2\le \d(x(t,u,z)\le\alpha$ for all $t\in[t_1,t_2]$. Then, in view of \eqref{gamma}, we have
$$J(z,u)\geq \int_{t_1}^{t_2} \ell_1(\d(x(t,u,z)))dt\geq \frac{\alpha}{2}\varkappa(\alpha/2,\alpha)=:\delta>0.$$
\end{proof}

Let $J$ be as in Lemma \ref{l2}. For any $z\in\overline{\R^n\setminus\C}$, we define the value function  
$$
V(z):=\inf_{u\in\M([0,+\infty),U)} J(z,u) \quad \text{for all} \  z\in \R^n\setminus\C, \qquad V(z):=0  \quad\text{for all $z\in\partial\C$.}
$$
 In the following two lemmas we show  that $V$ is a MRF for \eqref{Egen}-\eqref{Cgen}.  
 \begin{lemma}\label{l3} The function  $V: \overline{\R^n\setminus\T}\to[0,+\infty)$ has the following properties:
\begin{enumerate}[{\rm (i)}]
\item $\text{dom }V:=\{z\in \overline{\R^n\setminus\T} \mid V(z)<+\infty\}=\overline{\R^n\setminus\T}$;
\item  $V$ is positive definite;
\item  $V$ is proper;
\item $V$ is continuous.
\end{enumerate}
\end{lemma}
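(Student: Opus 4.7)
Items (i) and (ii) will follow directly from Lemma~\ref{l2}. For (i), any $z\in\R^n\setminus\C$ satisfies $V(z)\le J(z,\hat u)<+\infty$ by Lemma~\ref{l2}(i), while $V\equiv0$ on $\partial\C$ by definition. For (ii), I would apply Lemma~\ref{l2}(v) with $\alpha:=\d(z)/2$ to obtain $\delta>0$ with $J(z,u)>\delta$ for every measurable $u$, so $V(z)\ge\delta>0$ on $\R^n\setminus\C$, and $V=0$ on $\partial\C$ by definition.

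The bulk of the work is (iv). Continuity at a boundary point $z\in\partial\C$ will be immediate: for $z'\to z$ in $\R^n\setminus\C$, eventually $\d(z')<r_1$, so Lemma~\ref{l2}(ii) yields $0\le V(z')\le J(z',\hat u_{z'})\le\bar\beta(\d(z'),0)+\Phi(\d(z'))\to 0 = V(z)$. At an interior point $z\in\R^n\setminus\C$, I would establish upper semicontinuity via a splicing argument: for $\epsilon>0$, pick $u^\epsilon\in\U(z)$ with $J(z,u^\epsilon)\le V(z)+\epsilon$, then choose $\tau<T_z(u^\epsilon)$ close enough to $T_z(u^\epsilon)$ that $d_\tau:=\d(x(\tau,u^\epsilon,z))$ satisfies both $d_\tau<r_1$ and $\bar\beta(d_\tau,0)+\Phi(d_\tau)<\epsilon$. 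For $z'$ near $z$, I would concatenate $u^\epsilon|_{[0,\tau]}$ with the $\hat u_{y'}$ of Lemma~\ref{l1} started from $y':=x(\tau,u^\epsilon,z')$; continuous dependence on initial data forces $y'\to x(\tau,u^\epsilon,z)$, so Lemma~\ref{l2}(ii) controls the tail cost while the $[0,\tau]$-integral converges, yielding $V(z')\le V(z)+O(\epsilon)$ for $z'$ sufficiently close to $z$.

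The main obstacle is lower semicontinuity at an interior point $z$. Given $z_n\to z$ with $V(z_n)$ converging to some finite $\alpha$, I would extract $u_n\in\U(z_n)$ with $J(z_n,u_n)\le V(z_n)+1/n$, replacing $u_n$ by $\hat u_{z_n}$ whenever needed so that also $J(z_n,u_n)\le J(z_n,\hat u_{z_n})$; Lemma~\ref{l2}(iii) then confines the trajectories $x_n:=x(\cdot,u_n,z_n)$ to the compact set $\{\d\le\Psi(\d(z)+1)\}$ for large $n$. I would then perform the splicing in reverse: let $\tau_n>0$ be the first time at which $\d(x_n(\tau_n))=r_1/2$; this $\tau_n$ must be uniformly bounded, since $\ell$ is bounded below by $\ell(r_1/2)>0$ on the annular region traversed before $\tau_n$, while $J(z_n,u_n)$ stays bounded. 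Using continuous dependence on the common compact interval $[0,\sup_n\tau_n]$ to compare $x(\cdot,u_n,z_n)$ and $x(\cdot,u_n,z)$, I would concatenate $u_n|_{[0,\tau_n]}$ started from $z$ with $\hat u$ from the resulting endpoint; the resulting estimate $V(z)\le J(z_n,u_n)+o(1)\le V(z_n)+o(1)$ delivers the desired $V(z)\le\liminf_n V(z_n)$.

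Finally, (iii) will be a short consequence of (iv) and Lemma~\ref{l2}(iv). For any $K\subseteq[0,M]$ compact, Lemma~\ref{l2}(iv) applied with $\alpha:=M+1$ forces $V^{-1}([0,M])\subseteq\{z:\d(z)\le\Theta\}$, which is bounded because $\partial\C$ is compact; closedness of $V^{-1}(K)$ follows from the continuity of $V$ established in (iv), so $V^{-1}(K)$ is compact.
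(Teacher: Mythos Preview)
Your treatment of (i)--(iii), of continuity at boundary points, and of upper semicontinuity at interior points is correct and parallels the paper's proof. The lower-semicontinuity step, however, has a genuine gap. You splice $u_n|_{[0,\tau_n]}$ (run from $z$) with $\hat u_{y_n}$, where $y_n=x(\tau_n,u_n,z)$; but since $\tau_n$ is the first time $\d(x_n)=r_1/2$, continuous dependence gives $\d(y_n)\to r_1/2>0$, and Lemma~\ref{l2}(ii) then only bounds the tail cost $J(y_n,\hat u_{y_n})$ by the fixed positive constant $\bar\beta(r_1/2,0)+\Phi(r_1/2)$, not by $o(1)$. Your claimed estimate thus actually reads $V(z)\le J(z_n,u_n)+o(1)+C$ with $C>0$ fixed, which does not yield $V(z)\le\liminf_n V(z_n)$. (A side issue: the choice $r_1/2$ also tacitly assumes $\d(z)>r_1/2$.)

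The remedy---and this is exactly what the paper does---is to make the exit threshold $\varepsilon$-dependent: choose $\delta_\varepsilon>0$ so small that $\bar\beta(2\delta_\varepsilon,0)+\Phi(2\delta_\varepsilon)<\varepsilon$ (and $\delta_\varepsilon<\d(\bar z)/4$), and let $\tau_n$ be the first hitting time of $\{\d\le\delta_\varepsilon\}$. The tail cost is then $<\varepsilon$, at the price that the uniform time bound $\sup_n\tau_n\le T_\varepsilon$ (obtained from the lower bound of $\ell$ on $\{\delta_\varepsilon\le\d\le\Psi(\d(z)+1)\}$) now depends on $\varepsilon$, and so does the neighbourhood of $z$ on which continuous dependence keeps the $[0,\tau_n]$-integrals within $\varepsilon$ of each other. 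One obtains: for each $\varepsilon>0$ there is $\rho_\varepsilon>0$ such that $|z_n-z|<\rho_\varepsilon$ implies $V(z)\le V(z_n)+3\varepsilon$. The paper in fact collapses your two semicontinuity arguments into a single one by this device: for arbitrary $z_1,z_2$ near $\bar z$ with, say, $V(z_1)\le V(z_2)$, it takes an $\varepsilon$-optimal control from $z_1$ (with $J(z_1,\cdot)\le J(z_1,\hat u)$ so Lemma~\ref{l2}(iii) applies uniformly), runs it from $z_2$, and splices at the $\varepsilon$-dependent threshold time; the required uniform time bound over all such $z_1$ in a neighbourhood of $\bar z$ comes from Lemma~\ref{l2}(iii).
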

\begin{proof} Property (i) holds, because,   if $z\in\partial\C$, then $V(z)=0<+\infty$,  while, if $\d(z)>0$,  by Lemma \ref{l2},\,(i) we have  $V(z)\leq J(z,\hat u)<+\infty$. 

 In order to prove (ii),  observe that given $z\in\R^n\setminus\C$, then $\d(z)>\alpha$ for some $\alpha>0$. Hence, by Lemma \ref{l2},\,(v) there exists $\delta>0$, depending only on $\alpha$, such that $J(z,u)>\delta$ for all $u\in \mathcal M([0,+\infty),U)$. As a consequence,  $V(z)\geq \delta>0$, that is $V$ is positive outside the target. 
 
The function $V$ satisfies (iii) whenever, for all $\alpha>0$,  the sublevel set $E_\alpha:=\{z\in \overline{\R^n\setminus\T}\mid V(z)<\alpha\}$ is bounded. If $V(z)<\alpha$, then by definition there exists $u\in \U(z)$ such that $J(z,u)<\alpha$, as well. Hence, by Lemma \ref{l2},\,(iv) we deduce that $\d(z)<\Theta$ for some $\Theta>0$ (depending only on $\alpha$) and, consequently, the set  $E_\alpha$ is bounded, as $E_\alpha\subset \overline{B_\Theta(\T)\setminus\T}$.

 Let us finally prove (iv), i.e. the continuity of $V$. Fix $\varepsilon>0$ and let us first consider  $\bar z\in\partial \T$.    By virtue of Lemma \ref{l2},\,(ii),   for any $z$ such that $\d(z)<r_1$,  we have $V(z)\leq J(z,\hat u)\leq \hat\Phi(\d(z)):=\bar\beta(\d(z),0)+\Phi(\d(z))$, where, in particular,  $\hat\Phi$ is continuous, strictly increasing and equal to 0 at 0. Hence, choosing    
 \begin{equation}\label{delta_e}
 0<\delta_\varepsilon<\frac{1}{2} \left(r_1\land \hat\Phi^{-1}(\varepsilon)\right), 
\end{equation}
 we get  the continuity of $V$ at $\bar z$, as  
 \begin{equation}\label{V_e}
 |V(z)-V(\bar z)|=V(z)\leq J(z,\hat u)\le \hat\Phi(\d(z))<\varepsilon\quad \text{for all $z\in B_{2\delta_\varepsilon}(\bar z)$. }
\end{equation}
%

\noindent Assume now $\bar z\in\R^n\setminus\C$. {Setting  $\tilde\delta_{\bar z,\varepsilon}:=\delta_{\varepsilon}\land \frac{\d(\bar z)}{4}$, we have $B_{2\tilde\delta_{\bar z,\varepsilon}}(\bar z)\subset  \R^n\setminus\C$.}  We claim that  for any $z\in B_{\tilde\delta_{\bar z,\varepsilon}/2}(\{\bar z\})$  there exists an admissible triple  $(x^0,x,u)$ from $(0,z)$    such that 
  \begin{equation}\label{dis_dJ}
    \begin{array}{l}
    \displaystyle \d(x(t))\leq M_{\bar z}:=\max\left\{\Psi(\d(z))\mid z\in \overline{B_{r_1/4}(\{\bar z\})}\right\}\quad \text{for all $t\geq 0$,}
     \\[1.5ex]
    J(z,u)\le V(z)+\varepsilon.
    \end{array}
   \end{equation}
    Indeed, an $\varepsilon$-optimal triple  $(x^0,x,u)$ satisfying $J(z,u)\le V(z)+\varepsilon$ always exists, as $V(z)<+\infty$ by (i) above. 
Furthermore,  we can clearly assume  $J(z,u)\le J(z,\hat u)$, where $(\hat x^0,\hat x,\hat u)$ is the admissible triple from $(0,z)$ built  in Lemma \ref{l2}. But then the first of the inequalities above follows from Lemma \ref{l2},\,(iii).

 \noindent   Set 
 \begin{equation}\label{tildeT}
 \tilde T_{z}:=\inf\{t\ge0\mid \d(x(t))<\tilde\delta_{\bar z,\varepsilon}\}. 
   \end{equation}
{Note that $\tilde T_{z}>0$, since $\d(z)>\d(\bar z)-\frac{\tilde\delta_{\bar z,\varepsilon}}{2}>\frac{3}{2}\tilde\delta_{\bar z,\varepsilon}$.} Let $j=j_{\bar z,\varepsilon}$ be an integer $\ge1$    such that 
 $j\geq \d(\bar z)+\frac{\tilde\delta_{\bar z,\varepsilon}}{2}> \d(z)$. Then,  using  \eqref{J_j} (which is valid for every $j\ge \d(z)$,  in view of  \eqref{Rjest}) and  setting   $\displaystyle m_{\bar z,\varepsilon}:=\min_{R\in[\tilde\delta_{\bar z,\varepsilon},   M_{\bar z}]}\ell_1(R)>0$,  we get    
\begin{align*}
   L_j+\Phi(j)&\geq J(z,\hat u)\geq J(z, u) 
     \geq \int_0^{\tilde T_{z}}\ell_1(\d(x(t)))dt \geq \tilde T_{z} m_{\bar z,\varepsilon}.
\end{align*}
Thus, there exists an uniform upper bound for the times $\tilde T_z$. Precisely,   we have 
$$
\tilde T_z\le T_{\bar z,\varepsilon}:=\frac{ L_{j_{\bar z,\varepsilon}}+\Phi(j_{\bar z,\varepsilon})}{m_{\bar z,\varepsilon}}   \qquad \text{for all  $z\in B_{\tilde\delta_{\bar z,\varepsilon}/2}(\{\bar z\})$.}
$$
To prove the continuity of $V$ at $\bar z$,   consider  arbitrary points $z_1,z_2\in B_{\tilde\delta_{\bar z,\varepsilon}/2}(\{\bar z\})$ and suppose, for instance,  $V(z_1)\leq V(z_2)$. Let $(x_1^0,x_1,u_1)$ be an admissible triple from $(0,z_1)$ satisfying \eqref{dis_dJ} (for $z=z_1$). In particular, this implies that $x_1(t)$ lies in a compact set $\K$ depending only on $\bar z$  for all $t\ge0$. Hence,  if $L_{\bar z}$ denotes the Lipschitz constant in $x$  of the dynamics function $f$ on the compact set $\overline{B_1(\K)}$, by a standard cut-off technique we can derive that the  trajectory $x(\cdot, u_1,z_2)$ is defined for all $t\in[0, T_{\bar z,\varepsilon}]$ and satisfies  
$$
\displaystyle\sup_{t\in[0, T_{\bar z,\varepsilon}]} |x_1(t)-x(t, u_1,z_2)|\le|z_1-z_2|\,e^{L_{\bar z}\,T_{\bar z,\varepsilon}},
$$
as soon as $|z_1-z_2|< e^{-L_{\bar z}\,T_{\bar z,\varepsilon}}$.  Actually, from this inequality it also follows that, setting   
$$
\bar \delta_{\bar z,\varepsilon}:=\tilde\delta_{\bar z,\varepsilon}\land \left(\frac{\delta_\varepsilon}{2}\land 1\right)e^{-L_{\bar z}\,T_{\bar z,\varepsilon}}
$$
($\delta_\varepsilon$ as in \eqref{delta_e}), and assuming $z_1$, $z_2\in  B_{\bar\delta_{\bar z,\varepsilon}/2}(\{\bar z\})$, we have that ($|z_1-z_2|< \bar \delta_{\bar z,\varepsilon}$ and) $|x(\tilde T_{z_1},u_1,z_2)-x_1(  \tilde T_{z_1})|\le \frac{\delta_\varepsilon}{2}$ ($\tilde T_{z_1}$ is as in \eqref{tildeT}, for $x=x_1$). Since $\d(x_1(\tilde T_{z_1}))\le \delta_\varepsilon$, this implies that  $\bar z_2:=x(\tilde T_{z_1},u_1,z_2)\in B_{2\delta_\varepsilon}(\bar z)$.  At this point, if $\hat u_2\in\U(z_2)$ denotes an admissible control from $(0,\bar z_2)$ as in  Lemma \ref{l2}, the last part of \eqref{V_e} implies that $J(\bar z_2,\hat u_2)<\varepsilon$.  Therefore, the control $u_2$ given by 
$$u_2(t):=\begin{cases} u_1(t) \quad &t\in [0,\tilde T_{z_1}]\\
 \hat u_2(t-\tilde T_{z_1}) &t\in(\tilde T_{z_1}, +\infty)\end{cases}$$
 belongs to $\U(z_2)$,  $x_2(t):=x(t, u_2,z_2)$ belongs to $\overline{B_1(\K)}$ for all $t\in[0,\tilde T_{z_1}]$, and denoting with $\omega_{\bar z}$   the modulus of continuity of $\overline{B_1(\K)}\ni x\mapsto\ell(\d(x))+l(x,u)$ (uniform w.r.t. the control, because of the assumptions on $l$), we  finally obtain
%
\begin{align*}
    0&\le  V(z_2) -V(z_1) \leq J(z_2,u_2)-J(z_1,u_1)+\varepsilon   \\
      &\leq \int_{0}^{\tilde T_{z_1}}\left[|\ell(\d(x_2(t))) -\ell(\d(x_1(t)))|\,dt+ |l(x_2(t),u_1(t)) 
-l(x_1(t),u_1(t))|\right]dt \\
& \qquad +J(\bar z_2, \hat u_2)+\varepsilon \le    T_{\bar z,\varepsilon}\, \omega_{\bar z} (\bar \delta_{\bar z,\varepsilon})+2\varepsilon\le 3\varepsilon,
\end{align*}
where the last inequality holds by replacing $\bar \delta_{\bar z,\varepsilon}$ with  $\bar \delta_{\bar z,\varepsilon}\land \omega_{\bar z}^{-1}\left(\frac{\varepsilon}{T_{\bar z,\varepsilon}}\right)$.  The continuity of $V$ at $\bar z$ hence follows by the arbitrariness of $\varepsilon$.

\end{proof}

\begin{lemma}\label{l4}
The value function $V$ satisfies the decrease condition \eqref{MRH}.
\end{lemma}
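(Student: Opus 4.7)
The plan is to show that, by construction, $V$ is a proximal viscosity supersolution of the Hamilton--Jacobi--Bellman equation
$$\max_{u\in U}\bigl\{-\langle p,f(z,u)\rangle-\ell(\d(z))-l(z,u)\bigr\}=0,\qquad z\in\R^n\setminus\C,$$
associated to its definition as a value function, which immediately rewrites as: for all $z\in\R^n\setminus\C$ and all $p\in\partial_PV(z)$,
$$\min_{u\in U}\bigl\{\langle p,f(z,u)\rangle+l(z,u)\bigr\}\le-\ell(\d(z)).$$
Since $\ell$ is continuous and strictly increasing and $V$ is continuous, positive definite, and proper (Lemma \ref{l3}), the bound $V(z)\le d_{V^+}(\d(z))$ from Subsection \ref{Sprel} gives $\d(z)\ge d_{V^+}^{-1}(V(z))$, whence $\ell(\d(z))\ge\ell(d_{V^+}^{-1}(V(z)))$. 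Approximating the latter composition from below by a continuous, strictly increasing function $\gamma:(0,+\infty)\to(0,+\infty)$ yields \eqref{MRH} with $\p\equiv1$, which trivially satisfies the integrability condition (IC).

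The supersolution property itself rests on the Dynamic Programming Principle for $V$: for every $z\in\R^n\setminus\C$, every sufficiently small $t>0$, and every $u\in\M([0,+\infty),U)$ keeping $x(\cdot\,,u,z)$ inside $\R^n\setminus\C$ on $[0,t]$,
$$V(z)\le\int_0^t\bigl[\ell(\d(x(s,u,z)))+l(x(s,u,z),u(s))\bigr]\,ds+V(x(t,u,z)),$$
with an approximate converse inequality holding along near--optimal controls. Both directions follow in the standard way from the definition of $V$, thanks to Lemma \ref{l2}\,(iii), which confines $\varepsilon$-optimal trajectories in a compact set depending only on $z$, and to the continuity of $V$ established in Lemma \ref{l3}. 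The inequality on the proximal subdifferential is then obtained by the classical test--function contradiction argument: given $p\in\partial_PV(z)$, the smooth local minorant $\varphi(y):=V(z)+\langle p,y-z\rangle-\sigma|y-z|^2$ touches $V$ at $z$ from below, so if the HJB inequality failed strictly at $z$, continuity of $f,l,\d,\ell$ and compactness of $U$ would yield a uniform $\delta>0$ on a neighborhood of $z$; integrating the chain rule for $\varphi\circ x(\cdot\,,u,z)$ along an arbitrary control $u$ and comparing with DPP would then produce $0\ge\delta t/2$ for small $t$, a contradiction.

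The main technical obstacle is the DPP itself in this setting: the Lagrangian $L(z,u)=\ell(\d(z))+l(z,u)$ vanishes on $\partial\C$, the dynamics $f$ is only locally Lipschitz in $x$ with no linear growth assumption, and the exit time $T_z(u)$ may be finite, so the standard statements in \cite{BCD,GS04} do not apply verbatim. This is precisely the difficulty the authors address through the ad hoc super--optimality principle stated as Proposition \ref{Th_supsol} and the generalized infinite--horizon comparison Lemma \ref{LCP}, and the proof of Lemma \ref{l4} essentially reduces to an application of these results to the present data.
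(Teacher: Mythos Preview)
Your outline in the first three paragraphs is essentially the paper's argument: Step~1 reduces the decrease condition to showing that $V$ is a viscosity supersolution of \eqref{sup_sol}, with $\p\equiv1$ and $\gamma:=\ell\circ d_{V^+}^{-1}$ (no further approximation is needed, since $d_{V^+}$ has already been regularised in Subsection~\ref{Sprel} and $\ell$ is continuous and strictly increasing by Lemma~\ref{l2}); Step~2 proves the one--sided dynamic programming inequality \eqref{DPP}; Step~3 runs the standard test--function argument, using Lemma~\ref{l2}\,(iii) to confine $\varepsilon$--optimal trajectories to a compact set depending only on $z$, which supplies the local Lipschitz bound on $f$ and the modulus of continuity for $\ell$ and $l$ required to pass to the limit.

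Your last paragraph, however, misidentifies the technical tools. Proposition~\ref{Th_supsol} and Lemma~\ref{LCP} belong to Section~\ref{s4} and are used for the \emph{opposite} implication (ii)$\Rightarrow$(i): there one starts from a given MRF $W$ and extracts admissible triples via the super--optimality principle. They play no role in the proof of Lemma~\ref{l4}; indeed, invoking Proposition~\ref{Th_supsol} here would be circular, since its hypothesis is precisely that the function in question already satisfies the decrease condition we are trying to establish. The obstacle you correctly flag (no linear growth, vanishing Lagrangian, possible finite exit time) is handled in the paper \emph{directly}: the confinement property of Lemma~\ref{l2}\,(iii) restricts the analysis to controls in $\hat\U(z)$, on whose trajectories $f$, $\ell$, $l$ admit uniform bounds, and only the inequality $V(z)\ge v_T(z)$ is needed, which follows from the very definition of $V$ without any comparison principle.
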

\begin{proof} We divide the proof in three steps.    
\vsm
{\it Step 1.}  Let us  first show that, if  $V$ is a viscosity supersolution of the following Hamilton-Jacobi-Bellman equation
\begin{equation}\label{sup_sol}
\max_{u\in U} \{-\langle DV(z),f(z,u)\rangle-l(z,u)\}=\ell(\d(z)) \qquad \text{for all} \ z\in\R^n\setminus\T,
\end{equation}
then  it satisfies the decrease condition \eqref{MRH},  characterizing MRFs.  
Indeed, \eqref{sup_sol} implies that (see e.g.  the survey paper \cite{C10})
$$
\max_{u\in U} \{-\langle p,f(z,u)\rangle-l(z,u)\}\ge \ell(\d(z)) \qquad \text{for all} \ z\in\R^n\setminus\T, \quad \text{for all} \ p\in  \partial_P V(z),
$$
so that, for $H$ defined as in \eqref{Ham} and  $p_0\equiv 1$, one has
$$
H(z,1,\partial_P V(z))\leq - \ell(\d(z))\qquad \text{for all} \ z\in\R^n\setminus\T, \quad \text{for all} \ p\in  \partial_P V(z), 
$$
Setting $\gamma:=\ell\circ d_{V^+}^{-1}$ and recalling that $L$ is increasing as well as $d_{V^+}$, by \eqref{Ldis}  we finally obtain that $V$ satisfies condition \eqref{MRH} for $p_0\equiv 1$ and such a $\gamma$, namely
$$
H(z,1,\partial_P V(x))\leq - \gamma(V(z))\quad \ \text{for all} \   z\in \R^n\setminus\T.
$$
Thus, the next two steps will be devoted to prove that $V$ is a viscosity supersolution of \eqref{sup_sol}.  This proof is not completely standard, because we do not have the usual growth hypotheses on  $f$, $\ell$ and $l$ (see e.g. \cite[Ch. III]{BCD}). Actually, these assumptions  can be  avoided here thanks to the results in Lemma \ref{l2}.  
\vsm
{\it Step 2.}  Let us  show that, for every $T>0$ and every $z\in\R^n\setminus\T$, one has 
 \begin{equation}\label{DPP}
 V(z)\ge 
 \displaystyle \inf_{u\in\hat\U(z)}\left\{ \int_0^{T_z(u)\land T}[\ell(\d(x(t)))+l(x(t),u(t))]dt+V(x(T_z(u)\land T))\right\},
 \end{equation}
where $x:=x(\cdot\,, u,z)$, $\hat\U(z):=\{u\in\hat\U(z)\mid \ J(z,u)\le J(z,\hat u)\}$,   and $\hat u$ is as in Lemma \ref{l2}.  In view of Lemma \ref{l2},(i), the set  $\hat\U(z)\ne\emptyset$   and  
 $$
V(z)=\inf_{u\in\hat\U(z)} \int_0^{T_z(u)}[\ell(\d(x(t)))+l(x(t),u(t))]dt<+\infty.
$$
Let us refer to the right-hand side of \eqref{DPP} as $v_T(z)$. Given $u\in\hat\U(z)$, if $T_z(u)\le T$ we have $V(x(T_z(u)\land T))=0$ and $J(x,u)\ge v_T(z)$. If instead $T_z(u)> T$, in view of the definition of $v_T$,  we get
$$
\begin{array}{l}
 \displaystyle J(z,u)=\int_0^{T}[\ell(\d(x(t)))+l(x(t),u(t))]dt+\int_T^{T_z(u)}[\ell(\d(x(t)))+l(x(t),u(t))]dt \\
 \displaystyle =\int_0^{T}[\ell(\d(x(t)))+l(x(t),u(t))]dt+\int_0^{T_{x_T(0)}(u_T)}[\ell(\d(x_T(t)))+l(x_T(t),u_T(t))]dt \\
  \displaystyle \ge \int_0^{T}[\ell(\d(x(t)))+l(x(t),u(t))]dt+V(x(T))\ge v_T(z),
\end{array}
$$
where $x_T(t):=x(t+T)$, $u_T(t):=u(t+T)$, and $T_{x_T(0)}(u_T)=T_z(u)-T$. Therefore, $V(z)=\inf_{u\in\hat\U(z)}J(z,u)\ge v_T(z)$, that is, the relation \eqref{DPP} is proven. 

\vsm
{\it Step 3.} Let us now deduce from  \eqref{DPP} that $V$ is a viscosity supersolution of \eqref{sup_sol}. 
To this aim, fixed $z\in\R^n\setminus\T$, we preliminarily  observe that, in view of Lemma \ref{l2},(iii),   every admissible trajectory-control pair $(x,u)$ with  $u\in\hat\U(z)$,  satisfies
 $$
\d(x(t))\le \Psi(\d(z))=:R_z \qquad \text{for all} \  t\ge0.
$$
Furthermore,  on the compact set $\overline{B(\T, R_z)\setminus\T}$, depending only on $z$, the functions  $f$ and $l$ are Lipschitz continuous in $x$, uniformly w.r.t. the control, and we can fix a modulus of continuity for $\ell$ and a bound $M_z>0$ for $|f|$, $\ell$, and $l$. 
Hence,  choosing e.g.     $\bar T_z=\frac{\d(z)}{2 M_z}$,   for any  $u\in\hat\U(z)$   the trajectory $x(\cdot\,, u,  z)$ is defined on   $[0,\bar T_z]$ and satisfies $0<  \d( x(t, u,  z) )\le R_z$ for all $t\in [0,\bar T_z]$.  For arbitrary $\varepsilon>0$ and $0<T<\bar T_z$,   \eqref{DPP} implies that there exists some $\bar u=\bar u_{\varepsilon,T}\in\hat\U(z)$, such that 
$$
\int_0^{T}[\ell(\d(\bar x(t)))+l(\bar x(t),\bar u(t))]dt+V(\bar x(T))\le V(z)+\varepsilon T,
$$
where $\bar x:=x(\cdot,\,\bar u,z)$.  In view of the above considerations, from now on,   taken a test function  $\varphi\in C^1(\R^n)$  such that $\varphi(z)=V(z)$ and $\varphi(\tilde z)\le V(\tilde z)$ for all $\tilde z\in B(z,r)$,  for some $r>0$, the proof that $V$ is a viscosity supersolution of  \eqref{sup_sol} proceeds as usual, hence we omit it   (see e.g. \cite[Prop. III, 2.8]{BCD}).  

\end{proof}
 Surveying the results on  $V$ in Lemmas  \ref{l3} and  \ref{l4}, we see that the proof of implication (i) $\Longrightarrow$ (ii) is concluded. \qed
 
 \section{Proof of implication {\rm (ii)} $\Longrightarrow$ {\rm (i)}}\label{s4}   
 The proof that the existence of a MRF  implies GAC to $\T$ with regulated cost relies on the following result, establishing a    super-optimality principle satisfied by any MRF.  
\begin{proposition}\label{Th_supsol}
 Let $W:\overline{\R^n\setminus\C}\to[0,+\infty)$ be a continuous MRF for \eqref{Egen}-\eqref{Cgen}  for some continuous and increasing  function  $p_0:(0,+\infty)\to[0,1]$ and some continuous and strictly increasing  function $\gamma:(0,+\infty)\to(0,+\infty)$.   Then, for any $z\in\R^n\setminus\C$, we have
\begin{equation}\label{SOP}
W(z)\ge 
 \inf_{u\in \U(z)}\sup_{0\le T< T_z(u)}\left\{\begin{array}{r}  \displaystyle\int_0^{ T}\left[p_0(W(x(t)))l(x(t),u(t))+\gamma(W(x(t)))\right]dt \\
 +W(x(T))
 \end{array}\right\},
\end{equation}
where $x(\cdot):=x(\cdot\,,u,z)$.
\end{proposition}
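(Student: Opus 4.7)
My plan is to interpret the super-optimality principle \eqref{SOP} as the outcome of a viscosity comparison between $W$ and a value function built on the right-hand side. The first step is to rephrase the decrease condition \eqref{MRH} as a viscosity supersolution statement. Since $W$ is continuous, the standard link between proximal subdifferentials and $C^1$ test functions shows that \eqref{MRH} is equivalent to
$$
\sup_{u\in U}\Big\{-\langle D\varphi(z),f(z,u)\rangle-p_0(W(z))\,l(z,u)-\gamma(W(z))\Big\}\ge 0
$$
whenever $\varphi\in C^1$ and $W-\varphi$ has a local minimum at $z\in\R^n\setminus\C$. Thus $W$ is a viscosity supersolution on $\R^n\setminus\C$ of the HJB equation with running cost $L(z,u):=p_0(W(z))\,l(z,u)+\gamma(W(z))$, which is to be regarded as a continuous function of $(z,u)$ only since $W$ is fixed.

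Next, I would denote by $\tilde V$ the right-hand side of \eqref{SOP}, namely
$$
\tilde V(z):=\inf_{u\in\U(z)}\sup_{0\le T<T_z(u)}\bigg\{\int_0^T L(x(t),u(t))\,dt+W(x(T))\bigg\}.
$$
By a standard dynamic programming argument, $\tilde V$ is a viscosity subsolution of the same HJB equation on $\R^n\setminus\C$; moreover, using that $W\to 0$ near $\partial\C$, one checks $\tilde V=0$ on $\partial\C$. The inequality $W\ge\tilde V$, which is exactly \eqref{SOP}, then follows from the generalized comparison principle Lemma \ref{LCP} announced in the introduction.

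The main obstacle is precisely what forces Lemma \ref{LCP} to go beyond the classical theory. Standard infinite-horizon comparison results (\cite[Thm.~2.40]{BCD}, \cite[Thm.~3.3]{GS04}) need Lipschitz dependence of the zero-order term and linear growth of $f$, whereas here $p_0$ is only continuous and $f$ only locally Lipschitz. I plan to address the first point by using the monotonicity of $p_0,\gamma$ and the continuity of $W$ in the doubling-of-variables argument, and the second by exploiting the properness of $W$ together with the a priori bound $W(x(t))\le W(z)$ valid along near-optimal trajectories of the auxiliary problem, which acts as a Lyapunov barrier confining those trajectories to a compact sublevel set of $W$ on which $f$ is Lipschitz. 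A secondary check is that the infimum defining $\tilde V(z)$ is taken over a nonempty set: this will follow because the supersolution property produces trajectories along which $W$ strictly decreases, while $\int_0^{T_z(u)}\gamma(W(x))\,dt\le W(z)<+\infty$ together with $\gamma>0$ and properness of $W$ forces $\d(x(t))\to 0$.
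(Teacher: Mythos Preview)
Your plan has the right starting point—the decrease condition \eqref{MRH} is indeed equivalent to the viscosity supersolution inequality you wrote—but the comparison step does not go through as you describe, and this is where the real work lies.

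First, Lemma \ref{LCP} is a comparison principle for \emph{bounded} sub- and supersolutions of a \emph{discounted} equation $\sigma v+H=0$. Your equation has no discount ($\sigma=0$) and $W$ is unbounded because it is proper; moreover the running cost $L(z,u)=p_0(W(z))l(z,u)+\gamma(W(z))$ is neither bounded nor uniformly continuous on $\R^n\setminus\C$. So you cannot invoke Lemma \ref{LCP} directly on the pair $(W,\tilde V)$. The paper circumvents this by a sequence of reductions you are missing: (a) localize by cutting off $f$ and $L$ outside sublevel sets of $W$, so that they become globally bounded and Lipschitz/uniformly continuous; (b) lift to $\R^{n+1}$ by treating the accumulated cost $r$ as an extra state, so the equation becomes $\max_u\{-\langle Dv,(\tilde f_\varepsilon,\tilde\ell_\varepsilon)\rangle\}=0$ with no zero-order term—this is precisely the splitting $(F_1,F_2)$ that Lemma \ref{LCP} is tailored for, with $F_2=\tilde\ell_\varepsilon$ merely continuous in $x$; (c) compose with a bounded strictly increasing $\lambda$ to turn $W(z)+r$ into a bounded supersolution $V(z,r)=\lambda(W(z)+r)$; (d) reintroduce a discount $\sigma>0$ via an obstacle/optimal-stopping formulation with obstacle $\Psi=V$, apply Lemma \ref{LCP} to get $V\ge V^\varepsilon_\sigma$, and only then send $\sigma\to 0$ and undo the cutoffs using properness of $W$. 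Your ``Lyapunov barrier'' intuition is correct in spirit, but it enters at the very end (Step 3 of the paper) to remove the cutoffs, not as a substitute for the discount.

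Second, your claim that $\tilde V$ is a viscosity subsolution ``by a standard dynamic programming argument'' hides a difficulty: you have not shown $\tilde V$ is finite or upper semicontinuous, and the $\sup_T$ inside the $\inf_u$ makes the usual DPP argument delicate. The paper sidesteps this entirely by comparing against the value function $V^\varepsilon_\sigma$ of a well-posed discounted problem, for which the subsolution property is standard.

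Finally, your ``secondary check'' that $\U(z)\ne\emptyset$ is essentially circular: producing a trajectory along which $W$ decreases and $\d(x(t))\to 0$ from the pointwise Hamiltonian inequality is exactly the content of the super-optimality principle you are trying to prove. In the paper this comes out as a by-product of the comparison (see Step 3), not as a separate preliminary.
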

The proof of this proposition relies on the definite positiveness and properness of $W$ coupled with an extension of results which are already known only under more restrictive assumptions than ours  (see e.g. \cite[Thm. 2.40]{BCD} and  \cite[Thm. 3.3]{GS04}), and it  will be given at the end of the section. 

\vsm  Let $W:\overline{\R^n\setminus\C}\to[0,+\infty)$ be a continuous MRF for \eqref{Egen}-\eqref{Cgen}  for some   functions $p_0:(0,+\infty)\to[0,1]$ and $\gamma:(0,+\infty)\to(0,+\infty)$ as in Def. \ref{defMRF}. Moreover, assume that $p_0$ satisfies the integrability  condition (IC).  Fix $z\in\R^n\setminus\C$.  From \eqref{SOP} it follows that, for  $\varepsilon_1:=\frac{W(z)}{2}>0$ there exists some admissible control $u_1\in\U(z)$ such that, for any $0\le T<T_z(u_1)$, we have
\begin{align}\label{OPTe}
\displaystyle\int_0^{ T}[p_0(W(x_1(t)))l(x_1(t),u_1(t)) +\gamma(W(x_1(t)))]dt &+W(x_1(T)) \nonumber \\
 &\le W(z)+\frac{W(z)}{2}, 
\end{align}
where $x_1(\cdot):=x(\cdot\,,u_1,z)$. We set
$$
t_1:=\inf\left\{t\in[0,T_z(u_1))\mid \ W(x_1(t))\le  \frac{W(z)}{2}\right\}, \qquad z_1:=x_1(t_1).
$$
Clearly, $t_1$ is $>0$ and  is actually a minimum. Hence, \eqref{OPTe} implies that
\begin{equation}\label{rel1}
\left\{\begin{array}{l}\displaystyle\frac{1}{2}\,W(z)\le W(x_1(t))\le  \frac{3}{2}\,W(z) \qquad \text{for all $t\in[0,t_1]$,} \\[1.5ex]
\displaystyle W(z_1)=W(x_1(t_1))=\frac{1}{2}\,W(z),
\end{array}\right.
\end{equation} 
so that 
\begin{align*}
\displaystyle p_0(W(z_1))\,\int_0^{ t_1}&l(x_1(t),u_1(t))\,dt\le \int_0^{ t_1}p_0(W(x_1(t)))\,l(x_1(t),u_1(t))\,dt \\
&\le  W(z)+\frac{W(z)}{2}- W(z_1)=W(z)=2[W(z)-W(z_1)],
\end{align*}
which yields  the cost bound
\begin{equation}\label{rel2}
\int_0^{ t_1}l(x_1(t),u_1(t))\,dt\le 2\frac{W(z)-W(z_1)}{p_0(W(z_1))}.
\end{equation} 
From \eqref{OPTe}, using  the functions $d_{W^-}$, $d_{W^+}$ introduced  in \eqref{Ldis}, we also obtain  
{\small$$
\gamma\left(\frac{1}{2}\,d_{W^-}(\d(z))\right)\,t_1\le\gamma(W(z_1))\,t_1\le\int_0^{t_1}\gamma(W(x_1(t)))\,dt\le \frac{3}{2}\,W(z)\le  \frac{3}{2}\,d_{W^+}(\d(z)).
$$}
Define now the continuous   function  $\bar T_1:(0,+\infty)\to(0,+\infty)$, given by 
$$
\bar T_1(R):=\frac{3d_{W^+}(R)}{2\gamma\left(\frac{1}{2}\,d_{W^-}(R)\right)} \qquad \text{for all $R>0$.}
$$
 Hence, the latter inequality   yields the following uniform time bound 
\begin{equation}\label{rel3}
t_1\le \bar T_1(\d(z)).
\end{equation} 
Starting from $z_1$ and choosing $\varepsilon_2:=\frac{W(z_1)}{2}=  \frac{W(z)}{4}$, arguing as above we can deduce  from \eqref{OPTe}  the existence of a control $u_2\in\U(z_1)$ and a time $t_2>0$, such that, denoting by $x_2$ the trajectory $x(\cdot\,,u_2,z_1)$ and setting $z_2:=x_2(t_2)$, we get relations \eqref{rel1}-\eqref{rel3}  with $z$, $z_1$, $u_1$, $x_1$,  and $t_1$ replaced by $z_1$, $z_2$, $u_2$, $x_2$,  and $t_2$, respectively.  Set  $z_0:=z$.  In a recursive way,  for any integer $N\ge1$, we can thus choose $\varepsilon_N:=  \frac{W(z)}{2^N}$ and construct  $z_N$, $u_N$, $x_N$, and $t_N>0$, such that  $u_N\in\U(z_{N-1})$, and $x_N(\cdot):=x(\cdot\,,u_N,z_{N-1})$,   $z_N:=x_N(t_N)$,  satisfy 
\begin{equation}\label{rel1N}
\left\{\begin{array}{l}\displaystyle\frac{1}{2}\,W(z_{N-1})\le W(x_N(t))\le  \frac{3}{2}\,W(z_{N-1}) \qquad \text{for all $t\in[0,t_N]$,} \\[1.5ex]
\displaystyle W(z_N)=\frac{1}{2}\,W(z_{N-1})=\frac{1}{2^N}\,W(z),
\end{array}\right.
\end{equation} 
\begin{equation}\label{rel2N}
\int_0^{ t_N}l(x_N(t),u_N(t))\,dt\le 2\,\frac{W(z_{N-1})-W(z_N)}{p_0(W(z_N))}=4\,\frac{W(z_{N})-W(z_{N+1})}{p_0(W(z_N))},
\end{equation} 
and
\begin{equation}\label{rel3N}
t_N\le \bar T_N(\d(z)),
\end{equation} 
where 
$$
\bar T_N(R):=\frac{3d_{W^+}(R)}{2^N\gamma\left(\frac{1}{2^N}\,d_{W^-}(R)\right)} \qquad \text{for all $R>0$.}
$$
Set now $T_0:=0$, $T_N:=\sum_{j=1}^Nt_j$, and  $T_{\infty}:=\sum_{j=1}^{+\infty}  t_{j}$ 
and, for every integer $N\ge1$,  define the control $u\in\M([0,+\infty),U)$ given by
\begin{align*}\label{P_est}
u(t)&:=u_N(t-T_{N-1})\qquad \text{for all }t\in[T_{N-1},\ T_{N}), \\
u(t)&:=w  \qquad \text{for all }t\ge T_{\infty}, \quad \text{if $T_{\infty}<+\infty$,}
\end{align*}
(for $w\in U$ arbitrary). Recalling that $W$ is proper and positive definite, from \eqref{rel1N} it  is easy to deduce that   $\lim_{t\to T_\infty^-}\d(x(t,u,z))=0$, so    $u\in\U(z)$.  Let $(x^0,x,u)$ be the corresponding admissible triple from $(0,z)$ (defined on $[0,+\infty)$). In view of  \eqref{rel2N} and recalling that $1/p_0$ is decreasing, the cost $x^0$ satisfies
 \begin{align}
x^0(t)\le \int_0^{T_\infty}l(x(t),u(t))\,dt&\le\sum_{N=1}^{+\infty}4\,\frac{W(z_{N})-W(z_{N+1})}{p_0(W(z_N))} \\
&\le4\,\int_0^{W(z)/2}\frac{dv}{p_0(v)}=\bar W(z) \qquad \text{for every $t\ge0$,}\nonumber
\end{align}
as soon as we set $\bar W(z):=4P(W(z)/2)$ for all $z\in \overline{\R^n\setminus\C}$ ($P$ as in \eqref{P}).
Notice that this function $\bar W$ is continuous, proper and positive definite, by  the integrability assumption (IC). So, the triple $(x^0,x,u)$ satisfies the cost bound condition \eqref{DWreg} with regulation function $\bar W$.  To conclude the proof that control system \eqref{Egen} with cost \eqref{Cgen} is GAC to $\C$ with regulated cost, it remains only to prove the existence of a descent rate $\beta$, such that 
\begin{equation}\label{beta_proof}
\d(x(t))\le\beta(\d(z),t)  \qquad \text{for every $t\ge0$.}
\end{equation}
First of all, we claim that there exist a strictly increasing, unbounded, continuous function   $\Gamma:\R_{\ge0}\to\R_{\ge0}$   with $\Gamma(0)=0$,  and  a function $\mathbf{T}:\R_{>0}^2\to \R_{>0}$, such that,  for any   $0<r<R$,    for every $z\in \R^n\setminus\C$ with $\d(z)\leq R$, the trajectory $x$ from $z$ considered above  satisfies the following conditions:
	$$
	\begin{array}{lllll}
		
		&{\rm (a)} &\d(x(t)) \leq {\bf \Gamma}(R)\qquad &\ \text{for all} \   t \geq 0,\\[1.5ex]
		&{\rm (b)} &\d(x(t)) \leq r \qquad &\ \text{for all} \   t \geq \mathbf{T}(R,r).
	\end{array}
	$$
 Condition (a) follows from  \eqref{rel1N}, because by  \eqref{Ldis}  we have
$$
\d(x(t))\le d^{-1}_{W^-}(W(x(t)))\le  d^{-1}_{W^-}\left(\frac{3}{2}\,d_{W^+}(\d(z))\right)\le
\Gamma(R) \quad \text{for all $t\ge0$,}
$$
as soon as we choose $\Gamma(R):=d^{-1}_{W^-}\left(\frac{3}{2}\,d_{W^+}(R)\right)$, $R\ge0$. This $\Gamma$ has all the required properties in view of the properties of $d_{W^-}$ and $d_{W^+}$. In order to derive (b), we observe that  \eqref{rel1N}  implies  
$$
\d(x(t))\le d^{-1}_{W^-}(W(x(t)))\le  d^{-1}_{W^-}\left(\frac{3}{2^N}\,d_{W^+}(R)\right) \quad\text{ for all $t\ge T_N$,}
$$
so,  if  $N(R,r)$ is the smallest integer  $\ge \log_2\left(3\frac{  d_{W^+}(R)}{d_{W^-}(r)}\right)$, we get
$$
\d(x(t))\le r \qquad\text{ for all $t\ge T_{N(R,r)}$.}
$$
The time $T_{N(R,r)}$  depends   on $z$,  but, by   \eqref{rel3N},  the value
  $$
\mathbf{T}(R,r):=\sum_{j=1}^{N(R,r)}\bar T_{j}(R)
$$
is a uniform upper bound for $T_{N(R,r)}$. Hence, also condition (b) is valid. Now, arguing as in \cite[Sec.\,5]{S99},  for any $R>0$  let us introduce a strictly increasing, diverging sequence of positive times $(t_j)_{j\ge1}$ depending on $R$, such that
$$
t_j\ge \mathbf{T}\left(R,\frac{R}{j+1}\right)
$$
and define the function $b:[0,+\infty)\times [0,+\infty)\to[0,+\infty)$, given by
$$
b(R,t):=\begin{cases} 
\Gamma(R)\qquad\text{for all $t\in[0,t_1)$,} \\
\dfrac{R}{j+1} \qquad\text{for all $t\in[t_j, t_{j+1})$, \, $j\ge1$.}
\end{cases}
$$
From (a) and (b) it follows that 
$$
\d(x(t))\le b(\d(z),t)\qquad\text{for all $t\ge0$.}
$$
As already noticed in the proof of Lemma \ref{l1}, Step 3, it is actually a routine exercise to find a $\K\L$ function $\beta\ge b$. Therefore, the proof of  implication {\rm (ii)} $\Longrightarrow$ {\rm (i)} is thus complete. \qed

 \subsection{A Comparison Principle and the proof of Proposition  \ref{Th_supsol}}
In the proof of Proposition  \ref{Th_supsol},  we  will use the slightly modified  version below of the classical Comparison Principle for the infinite horizon problem. In particular,  in the known results it is basically necessary to assume the  unilateral Lipschitz continuity hypothesis (i) below\footnote{An alternative hypothesis to (i)  is local Lipschitz continuity and at most linear growth in the state, uniformly w.r.t. the control.}   (see e.g.  the comments after \cite[III.\,Thm.\,2.12]{BCD}). In the following lemma we show that, when the state has two components and the dynamics  $F$ depends on only one of them,   $x$-Lipschitz continuity of the $F$-component that  corresponds to the missing variable is not necessary.  
   \begin{lemma}[Comparison Principle]\label{LCP} 
Let $U\subset\R^m$, $A\subset\R^{m'}$ be  compact control sets (not both empty), let $L:\R^n\times\R^{n'}\times U\times A\to \R$ be a  continuous running cost,  and let $F_1:\R^n\times U\times A\to \R^n $,  $F_2:\R^n\times U\times A\to  \R^{n'}$ be continuous dynamics components, such that 
\begin{enumerate}[{\rm (i)}]
\item for some $C>0$,   $\langle F_1(x,u,a)-F_1(x',u,a),x-x'\rangle\le C|x-x'|^2$ for all $x$, $x'\in\R^n$ and $u\in U$, $a\in A$;
\item for some $\bar K>0$ ,  $|F_2(x,u,a)|\le \bar K(1+|x|)$ for all $(x,u,a)\in\R^n\times U\times A$ and $x\mapsto F_2(x,u,a)$  is uniformly continuous, uniformly w.r.t. the controls;  
\item $L$ is bounded and  $(x,z)\mapsto L(x,z,u,a)$  is uniformly continuous, uniformly w.r.t. the controls.   
\end{enumerate}
 Let $H:\R^{n}\times\R^{n'}\times\R^{n}\times\R^{n'}\to \R$ be the Hamiltonian defined as
$$
H(x, z, p_1,p_2):=\min_{a\in A}\max_{u\in U}\Big\{-\langle (p_1,p_2),(F_1(x,u,a), F_2(x,u,a))\rangle -L(x,z,u,a)\Big\}.
$$
Given $\sigma>0$, if $v_1$, $v_2:\R^n\times\R^{n'}\to\R$, bounded and continuous, are, respectively, a viscosity   sub- and supersolution of
$$
\sigma\, v(x,z)+H(x, z, D v(x,z))=0 \qquad \text{for all }(x,z)\in\R^n\times\R^{n'},
$$
then $v_1(x,z)\le v_2(x,z)$ for all $(x,z)\in\R^n\times\R^{n'}$.
\end{lemma}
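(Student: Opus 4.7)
The plan is to adapt the classical Crandall--Ishii--Lions doubling-of-variables argument, carefully exploiting the structural fact that $F_2$ depends only on $x$ (and not on $z$), which is precisely the feature enabling hypothesis (ii) to replace the usual $x$-Lipschitz requirement on the dynamics by mere uniform continuity plus linear growth. Argue by contradiction: suppose $M:=\sup_{(x,z)\in\R^n\times\R^{n'}}(v_1-v_2)(x,z)>0$. Fix a smooth coercive weight $\psi\in C^1(\R^n\times\R^{n'};[1,\infty))$ with sublinear gradient, e.g.\ $\psi(x,z)=\sqrt{1+|x|^2+|z|^2}$, and for $\varepsilon,\delta>0$ form the doubled functional
\[
\Phi_{\varepsilon,\delta}(x,z,x',z'):=v_1(x,z)-v_2(x',z')-\tfrac{|x-x'|^2+|z-z'|^2}{2\varepsilon}-\delta\big[\psi(x,z)+\psi(x',z')\big].
\]
By boundedness of $v_1,v_2$ and coercivity of $\psi$, $\Phi_{\varepsilon,\delta}$ attains a maximum at some $(\hat x,\hat z,\hat x',\hat z')$ lying in a compact set depending only on $\delta$; for $\delta$ small the maximum still exceeds $M/2$. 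The standard penalization lemma then gives $|\hat x-\hat x'|^2/\varepsilon,\,|\hat z-\hat z'|^2/\varepsilon\to 0$ as $\varepsilon\to 0$.

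Apply the viscosity sub- and supersolution inequalities at $(\hat x,\hat z)$ and $(\hat x',\hat z')$ with test functions read off from $\Phi_{\varepsilon,\delta}$, and exploit the $\min_a\max_u$ structure of $H$: given a tolerance $\eta>0$, pick $a^*$ almost attaining the $\min$ on the supersolution side and then a common $u^*$ almost attaining the $\max$ on both sides. Subtracting, we obtain
\begin{align*}
\sigma\big(v_1(\hat x,\hat z)-v_2(\hat x',\hat z')\big)\le\;&\tfrac{1}{\varepsilon}\langle\hat x-\hat x',F_1(\hat x,u^*,a^*)-F_1(\hat x',u^*,a^*)\rangle\\
&+\tfrac{1}{\varepsilon}\langle\hat z-\hat z',F_2(\hat x,u^*,a^*)-F_2(\hat x',u^*,a^*)\rangle\\
&+\big[L(\hat x,\hat z,u^*,a^*)-L(\hat x',\hat z',u^*,a^*)\big]+O(\delta)+\eta,
\end{align*}
where the $O(\delta)$ remainder gathers the penalty corrections $\delta\langle\nabla\psi,(F_1,F_2)\rangle$, controlled by the sublinearity of $\nabla\psi$ together with the linear growth of $F_1,F_2$ and the compactness of the evaluation points for fixed $\delta$.

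The first right-hand term is handled by hypothesis (i), giving $\le C|\hat x-\hat x'|^2/\varepsilon\to 0$. The third vanishes in the $\varepsilon\to 0$ limit by hypothesis (iii) together with $|\hat x-\hat x'|,\,|\hat z-\hat z'|\to 0$ on a common compact set. The main obstacle is the second term, since $F_2$ is only uniformly continuous in $x$. Here I would leverage the $z$-independence of $F_2$: write $|F_2(\hat x,u^*,a^*)-F_2(\hat x',u^*,a^*)|\le\omega_\delta(|\hat x-\hat x'|)$ for $\omega_\delta$ the modulus of continuity of $F_2$, uniform in $(u,a)$, on the compact set fixed by $\delta$. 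Combining with $|\hat z-\hat z'|=o(\sqrt{\varepsilon})$ from the refined penalization lemma, Young's inequality gives
\[
\tfrac{|\hat z-\hat z'|\,\omega_\delta(|\hat x-\hat x'|)}{\varepsilon}\le\tfrac{|\hat z-\hat z'|^2}{2\varepsilon}+\tfrac{\omega_\delta(|\hat x-\hat x'|)^2}{2\varepsilon},
\]
whose first summand tends to $0$; the second requires an additional refinement. The most delicate point of the proof is ensuring this second summand also vanishes: I would either strengthen the $x$-penalty to a quartic $|x-x'|^4/(4\varepsilon^2)$ (yielding $|\hat x-\hat x'|=o(\varepsilon^{1/2+\alpha})$ for some $\alpha>0$ given any Hölder-type control on $\omega_\delta$) or, more cleanly, preliminarily regularize $v_1,v_2$ by inf-/sup-convolution in the $z$-variable to a Lipschitz class before running the doubling, so that the $F_2$-term need not be estimated via the difference $\hat z-\hat z'$ at all but rather absorbed into a small correction controlled by the convolution parameter.

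Once the $F_2$-term is shown to vanish, letting $\varepsilon\to 0$ first, then $\eta\to 0$, and finally $\delta\to 0$ gives $\sigma M\le 0$, contradicting $M>0$ and completing the proof.
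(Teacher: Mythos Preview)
Your doubling argument is set up correctly and you have identified the right ``crux'': the term $\tfrac{1}{\varepsilon}\langle\hat z-\hat z',F_2(\hat x,u^*,a^*)-F_2(\hat x',u^*,a^*)\rangle$ cannot be controlled with a single penalization scale. However, neither of your proposed fixes closes the gap under the stated hypotheses. Route~(a), the quartic penalty, still only yields $|\hat x-\hat x'|=o(\sqrt{\varepsilon})$ from the refined penalization lemma; getting $\omega_\delta(|\hat x-\hat x'|)^2/\varepsilon\to 0$ then requires a H\"older-type modulus, which is not assumed (uniform continuity is all you have). Route~(b), inf/sup-convolution in $z$, is plausible in spirit but is a substantial detour that you do not carry out; one must verify that the regularized functions remain viscosity sub-/supersolutions of a perturbed equation (the Hamiltonian depends on $z$ through $L$) and then manage a double limit. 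As written, the proof is incomplete precisely at the point you flag as ``most delicate''.

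The paper resolves this with a device you are missing: it uses \emph{two independent penalization scales}, writing
\[
\Phi(x,z,y,w)=v_1(x,z)-v_2(y,w)-\frac{|x-y|^2}{2\varepsilon}-\frac{|z-w|^2}{2\rho}-\beta(\langle x\rangle^N+\langle z\rangle^N+\langle y\rangle^N+\langle w\rangle^N).
\]
The coarse bound gives $|\bar x-\bar y|\le c\sqrt{\varepsilon}$ and $|\bar z-\bar w|\le c\sqrt{\rho}$, while a refined comparison of the form $\Phi(\bar x,\bar z,\bar x,\bar w)+\Phi(\bar y,\bar z,\bar y,\bar w)\le 2\Phi(\bar x,\bar z,\bar y,\bar w)$ upgrades the $x$-estimate to $|\bar x-\bar y|^2/\varepsilon\le\omega(c\sqrt{\varepsilon})$. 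The $F_2$-term then reads
\[
\frac{|\bar z-\bar w|}{\rho}\,\omega_2(|\bar x-\bar y|)\le \frac{c}{\sqrt{\rho}}\,\omega_2(c\sqrt{\varepsilon}),
\]
and the simple choice $\rho:=\omega_2(c\sqrt{\varepsilon})$ makes this equal to $c\,\omega_2^{1/2}(c\sqrt{\varepsilon})\to 0$ as $\varepsilon\to 0$, with no H\"older assumption and no convolution. Decoupling the scales and slaving $\rho$ to $\varepsilon$ through the modulus $\omega_2$ is the missing idea.
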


\begin{proof} The proof is a careful adaptation of the proof of \cite[III.\,Thm.\,2.12]{BCD}, which we give in detail for the sakes of clarity and selfconsistency. If we divide the equation by $\sigma>0$, the functions $F_1/\sigma$, $F_2/\sigma$ and $L/\sigma$ satisfy the same structural hypotheses as above. Hence, we can assume $\sigma=1$ without loss of generality. In view of \cite[III.\,Rem.\,2.13]{BCD} we can also assume $v_1$ and $v_2$ uniformly continuous.

Set $\langle x \rangle:=(1+|x|^2)^{\frac{1}{2}}$. Take the map $\Phi:\R^{n}\times \R^{n'}\times\R^{n}\times \R^{n'}\to\R$, given by
$$
\begin{array}{l} 
\displaystyle \Phi(x,z,y,w):=v_1(x,z)-v_2(y,w)-\frac{|x-y|^2}{2\varepsilon} -\frac{|z-w|^2}{2\rho} \\[1.5ex]
\qquad\qquad\qquad\qquad\qquad\qquad\qquad\qquad-\beta(\langle x\rangle^N+\langle z\rangle^N+\langle y\rangle^N +\langle w\rangle^N)
\end{array}
$$
where $\varepsilon$, $\rho$,  $\beta$, $N$  are positive parameters to be chosen conveniently.  

Suppose by contradiction that there exist  $\delta>0$ and  $(\tilde x,\tilde z)\in\R^{n}\times \R^{n'}$ such that $v_1(\tilde x,\tilde z)-v_2(\tilde x,\tilde z)=\delta$. Choose $\beta>0$ such that $\beta\langle \tilde x \rangle\le\delta/8$ and $\beta\langle \tilde z \rangle\le\delta/8$, so that for all $0<N\le 1$ we have $2\beta(\langle\tilde x \rangle^N+\langle\tilde z \rangle^N)\le2\left(\frac{\delta}{8}+\frac{\delta}{8}\right)=\frac{\delta}{2}$, and
$$
\frac{\delta}{2}\le \delta- \frac{\delta}{2}\le v_1(\tilde x,\tilde z)-v_2(\tilde x,\tilde z)-2\beta(\langle\tilde x \rangle^N+\langle\tilde z \rangle^N)=\Phi(\tilde x,\tilde z,\tilde x,\tilde z)\le\sup\Phi.
$$
Since $\Phi$ is continuous and tends to $-\infty$ as $|x|+|z|+|y|+|w|\to+\infty$, there exists a maximum point
$(\bar x,\bar z,\bar y,\bar w)$, for which, in particular,   we get
\begin{equation}\label{Lf1}
0<\frac{\delta}{2}\le\Phi(\bar x,\bar z,\bar y,\bar w)=\sup\Phi.
\end{equation}
The obvious inequality $\Phi(\bar x,\bar z,\bar x,\bar z)+\Phi(\bar y,\bar w,\bar y,\bar w)\le 2\Phi(\bar x,\bar z,\bar y,\bar w)$,  
yields
$$
\frac{|\bar x-\bar y|^2}{\varepsilon}+\frac{|\bar z-\bar w|^2}{\rho}\le v_1(\bar x,\bar z)-v_1(\bar y,\bar w)+v_2(\bar x,\bar z)-v_2(\bar y,\bar w),
$$
so the boundedness of $v_1$ and $v_2$ implies that 
\begin{equation}\label{Lf2}
|\bar x-\bar y|\le c \sqrt{\varepsilon}, \qquad |\bar z-\bar w|\le c\sqrt{\rho},
\end{equation}
for some $c>0$. From  the inequality $\Phi(\bar x,\bar z,\bar x,\bar w)+\Phi(\bar y,\bar z,\bar y,\bar w)\le 2\Phi(\bar x,\bar z,\bar y,\bar w)$ and  thanks to the uniform continuity of $v_1$, $v_2$, we can thus derive that  
\begin{equation}\label{Lf3}
\frac{|\bar x-\bar y|^2}{\varepsilon}\le v_1(\bar x,\bar z)-v_1(\bar y,\bar z)+v_2(\bar x,\bar w)-v_2(\bar y,\bar w) \le \omega(|\bar x-\bar y|)\le \omega(c\sqrt{\varepsilon}),
\end{equation}
for some modulus of continuity $\omega$.  

Consider now the $C^1$ test functions
$$
\begin{array}{l}
\displaystyle\varphi(x,z):= v_2(\bar y,\bar w)+\frac{|x-\bar y|^2}{2\varepsilon} +\frac{|z-\bar w|^2}{2\rho} +\beta(\langle x\rangle^N+\langle  z\rangle^N+\langle \bar y\rangle^N +\langle \bar w\rangle^N), \\[1.5ex]
\displaystyle\psi(y,w):= v_1(\bar x,\bar z)-\frac{|\bar x-  y|^2}{2\varepsilon} -\frac{|\bar z-  w|^2}{2\rho} -\beta(\langle \bar x\rangle^N+\langle \bar z\rangle^N+\langle y\rangle^N +\langle   w\rangle^N).
\end{array}
$$
By definition of $(\bar x,\bar z,\bar y,\bar w)$, the function $v_1(x,z)-\varphi(x,z)=\Phi(x, z,\bar y,\bar w)$ obtains its maximum  at 
$(\bar x,\bar z)$, while   $v_2(y,w)-\psi(y,w)=-\Phi(\bar x, \bar z, y, w)$ obtains its minimum  at 
$(\bar y,\bar w)$.  As it is easy to see, we have
$$
\begin{array}{l}
\displaystyle D\varphi(\bar x,\bar z)=(D_x\varphi,D_z\varphi)(\bar x,\bar z)=\left(\frac{\bar x-\bar y}{\varepsilon}+\gamma_1\bar x,\frac{\bar z-\bar w}{\rho}+\gamma_2\bar z\right),  \\[1.5ex]
\displaystyle D\psi(\bar y,\bar w)=(D_x\psi,D_z\psi)(\bar y,\bar w)=\left(\frac{\bar x-\bar y}{\varepsilon}+\tau_1\bar y,\frac{\bar z-\bar w}{\rho}+\tau_2\bar w\right),
\end{array}
$$
if $\gamma_1:=\beta N \langle \bar  x\rangle^{N-2}$,  $\gamma_2:=\beta N \langle \bar  z\rangle^{N-2}$,  $\tau_1:=\beta N \langle \bar  y\rangle^{N-2}$,   $\tau_2:=\beta N \langle \bar  w\rangle^{N-2}$. The definition of viscosity sub- and supersolution yields 
$$
v_1(\bar x,\bar z)+H\left(\bar x,\bar z, D\varphi(\bar x,\bar z)\right)\le0\le v_2(\bar y,\bar w)+H\left(\bar y,\bar w, D\psi(\bar y,\bar w)\right),
$$
so that,  for some $u\in U$ and $a\in A$, we have
\begin{align*}
v_1&(\bar x,\bar z)- v_2(\bar y,\bar w) \\
&\le H\left(\bar y, \bar w, \frac{\bar x-\bar y}{\varepsilon}+\tau_1\bar y,\frac{\bar z-\bar w}{\rho}+\tau_2\bar w\right)-H\left(\bar x,\bar z, \frac{\bar x-\bar y}{\varepsilon}+\gamma_1\bar x,\frac{\bar z-\bar w}{\rho}+\gamma_2\bar z\right) \\
&\le \left\langle \frac{\bar x-\bar y}{\varepsilon}+\gamma_1\bar x, F_1(\bar x,u,a)\right\rangle+ \left\langle \frac{\bar z-\bar w}{\rho}+\gamma_2\bar z, F_2(\bar x,u,a)\right\rangle +L(\bar x,\bar z, u,a) \\
&\qquad-\left\langle \frac{\bar x-\bar y}{\varepsilon}+\tau_1\bar y, F_1(\bar y,u,a)\right\rangle+ \left\langle \frac{\bar z-\bar w}{\rho}+\tau_2\bar w, F_2(\bar y,u,a)\right\rangle -L(\bar y,\bar w, u,a).
\end{align*}
Hence, by  standard calculations (see also \cite[Lemma 2.11]{BCD}), using the definitions of $\gamma_1$, $\gamma_2$, $\tau_1$, and $\tau_2$,  \eqref{Lf2} and \eqref{Lf3}, we get
 \begin{align*}
&v_1(\bar x, \,\bar z)- v_2(\bar y,\bar w) \le C\frac{|\bar x-\bar y|^2}{\varepsilon}+\frac{|\bar z-\bar w|}{\rho}\,\omega_2(|\bar x-\bar y|)+\omega_L(|\bar x-\bar y|+|\bar z-\bar w|)\\
&\qquad\qquad\qquad+\gamma_1K(1+|\bar x|^2)+\tau_1K(1+|\bar y|^2)+\gamma_2K(1+|\bar z|^2)+\tau_2K(1+|\bar w|^2) \\
&\le C\omega(c\sqrt{\varepsilon})+\frac{c\,\omega_2(c\sqrt{\varepsilon})}{\sqrt{\rho}}+\omega_L(c\sqrt{\varepsilon}+c\sqrt{\rho})  
 + K\beta N( \langle \bar  x\rangle^N+\langle \bar  z\rangle^N+\langle \bar  y\rangle^N+\langle \bar  w\rangle^N),
\end{align*}
where $C$ is as in (i) above,  $K$ is a suitable positive constant (depending on $C$ and $\bar K$, $\bar K$ as in (ii)),  and $\omega_2$, $\omega_L$ are  the moduli of continuity of $F_2$ and $L$, respectively.
At this point, by choosing $N:=1\land\frac{1}{K}$,  $\rho:=\omega_2(c\sqrt{\varepsilon})$,  and using \eqref{Lf1},  we obtain 
\begin{align*}
 \frac{\delta}{2}&\le\Phi(\bar x,\bar z,\bar y,\bar w)\le v_1(\bar x,\bar z)- v_2(\bar y,\bar w)-\beta ( \langle \bar  x\rangle^N+\langle \bar  z\rangle^N+\langle \bar  y\rangle^N+\langle \bar  w\rangle^N) \\
 &\le C\omega(c\sqrt{\varepsilon})+c\,\omega^{1/2}_2(c\sqrt{\varepsilon})+\omega_L(c\sqrt{\varepsilon}+c\,\omega_2(c\sqrt{\varepsilon})),
\end{align*}
which leads to a contradiction as soon as we make $\varepsilon>0$ small enough. 
\end{proof}

This comparison principle is interesting in itself.  For instance,   it implies that the lipschitzianity conditions on the current cost under which the optimality principles in \cite{Sor99,GS04,M04,MS15} were obtained, can be replaced by mere continuity plus growth assumptions.
\vsm
 
\noindent{\it Proof of Proposition \ref{Th_supsol}}  From \cite[Thm. 8.1]{CL94}  it follows immediately that, given a MRF $W$ for some $p_0$ and $\gamma$,   the decrease condition \eqref{MRH}
is equivalent to the viscosity supersolution condition 
\begin{equation}\label{supsol}
 \max_{u\in U}\left\{-\langle D^-W(z),f(z,u)\rangle -p_0(W(z))l(z,u)-\gamma(W(z)) \right\}\ge 0 
\end{equation}
for all $z\in\R^n\setminus\C$.  For any  $0<b<c$, we define the sets 
$$
\S_{b}:=\{z\in\overline{\R^n\setminus\T}\mid \ \ W(z)< b\}, \quad \S_{(b,c)}:=\{z\in\overline{\R^n\setminus\T}\mid \  \ b<  W(z)< c\}.
$$
Since $W$ is continuous,  proper, and positive definite and $\partial\T$ is compact, these sets are open,  bounded, and nonempty.
\vsm
{\it Step 1.} Fix $M>0$. Then, the  set  $\S:=\overline{\S}_{M+1}$ is contained in the interior of $\S':=\overline{\S}_{M+2}$ and we can define a $C^1$ function $\tilde\eta:\R^n\to[0,1]$ such that
$$
\tilde\eta(z):=
\begin{cases} 
1 \qquad \text{if} \ z\in\S, \\
0 \qquad \text{if} \ z\in\R^n\setminus\S'.
\end{cases}
$$
For all $(z,u)\in\R^n\times U$,  we set 
$$
\tilde f(z,u):=\tilde\eta(z)f(z,u), \quad \tilde \ell(z,u):=\tilde\eta(z)\left[p_0(W(z))l(z,u)+\gamma(W(z))\right]\ (\ge0).
$$
The  functions $\tilde f$, $\tilde \ell$ are continuous, bounded, $x\mapsto \tilde f(x,u)$ is (globally) Lipschitz continuous  and $x\mapsto \tilde  \ell(x,u)$ is uniformly continuous,  uniformly w.r.t. the control. Since $\tilde\eta\ge0$, from \eqref{supsol} it follows that $W$ also satisfies  
\begin{equation}\label{supsol1}
 \max_{u\in U}\left\{-\langle D^-W(z),\tilde f(z,u)\rangle -\tilde \ell(z,u) \right\}\ge 0 \quad\text{for all } z\in\R^n\setminus\T.
\end{equation}

{\it Step 2.} Fix $\varepsilon\in(0,1)$  and let $\eta_\varepsilon:\R^n\to[0,1]$ be a $C^1$  function such that
$$
 \eta_\varepsilon(z):=
\begin{cases} 
1 \qquad \text{if} \ z\in\S_{\left(\varepsilon,\frac{1}{\varepsilon}\right)}, \\
0 \qquad \text{if} \ z\in\R^n\setminus \S_{\left(\frac{\varepsilon}{2},\frac{2}{\varepsilon}\right)}.
\end{cases}
$$ 
Setting,   for all $(z,u)\in\R^n\times U$, 
$$
\tilde f_\varepsilon(z,u):=\eta_\varepsilon(z)\tilde f(z,u), \quad \tilde \ell_\varepsilon(z,u):=\eta_\varepsilon(z)\tilde \ell(z,u),
 $$
we finally obtain that $W$  satisfies
\begin{equation}\label{supsol2}
 \max_{u\in U}\left\{-\langle D^-W(z),\tilde f_\varepsilon(z,u)\rangle -\tilde\ell_\varepsilon(z,u) \right\}\ge 0 \quad\text{for all } z\in\R^n.
\end{equation}
Let $\lambda:\R\to(0,1)$ be a $C^1$  function such that $0<\dot\lambda\le \bar C$ for some $\bar C>0$, $\lambda(s)\to 0$ as $s\to-\infty$ and $\lambda(s)\to 1$ as $s\to+\infty$, as, for instance,  $\lambda(s)=\frac{1}{\pi}\left(\arctan(s)+\frac{\pi}{2}\right)$. Hence, by \cite[Prop.\, 2.5]{BCD},  the function
$$
V(z,r):=\lambda(W(z)+r) 
$$
 turns out to be  a bounded, continuous, and nonnegative viscosity supersolution of the Hamilton-Jacobi-Bellman equation
$$ 
\max_{u\in U}\left\{-\langle Dv(z,r)\,,\,\tilde F_\varepsilon(z,u)\rangle \right\}=0 \quad \text{for all }(z,r)\in\R^{n+1},
$$
where $\tilde F_\varepsilon(z,u):=(\tilde f_\varepsilon(z,u),\tilde \ell_\varepsilon(z,u))$. Set $\Psi(z,r):=V(z,r)$. Since $V\ge0$, given $\sigma>0$,     $V$ is also a viscosity supersolution of the  obstacle equation
\begin{equation}\label{supsol3}
\min\left\{ \sigma\,v(z,r)+\max_{u\in U}\left\{-\langle Dv(z,r)\,,\,\tilde F_\varepsilon(z,u)\rangle \right\}, v(z,r)-\Psi(z,r)\right\}=0 \  \text{on} \ \R^{n+1}.
\end{equation}
By introducing a new control $a\in\{0,1\}$ and defining $\hat F_\varepsilon(z,u,a):=a\, \tilde F_\varepsilon(z,u)$,   $\hat L(z,r,u,a):=(1-a)\sigma\Psi(z,r)$,  we can  reformulate \eqref{supsol3} as  
\begin{equation}\label{supsol4}
 \sigma\,v(z,r)+\min_{a\in\{0,1\}}\max_{u\in U}\left\{-\langle Dv(z,r)\,,\,\hat F_\varepsilon(z,u,a)\rangle -\hat L(z,r,u,a)\right\}=0 \  \text{on} \ \R^{n+1},
\end{equation}
where all the assumptions of  Lemma \ref{LCP} are met. Thus, \eqref{supsol4} (equivalently,  \eqref{supsol3}) satisfies the comparison principle for bounded viscosity sub- and supersolutions. In particular, by a standard dynamic programming procedure, the unique bounded, continuous solution  of \eqref{supsol3} is the value  function
$$
V^\varepsilon_\sigma(z,r):=\inf_{u\in\M([0,+\infty),U)}\sup_{T\ge0}e^{-\sigma T}\Psi(\tilde x_\varepsilon(T),\tilde x^0_\varepsilon(T))  \qquad\text{for all} \  (z,r)\in\R^{n+1},
$$
where   $(\tilde x_\varepsilon,\tilde x^0_\varepsilon)$  is the unique solution of  the control system $(\dot  x,\dot  x^0)=\tilde F_\varepsilon(x,u)$ with initial condition $(z,r)$. From the comparison principle, 
\begin{equation}\label{est_sigma}
V(z,r)\ge V^\varepsilon_\sigma(z,r)  \qquad\text{for all} \  (z,r)\in\R^{n+1}.
\end{equation}
Given $z\in\S_{\left(\varepsilon,\frac{1}{\varepsilon}\right)}$ and for every $u\in\M([0,+\infty),U)$,  set 
$$
\tilde T_z^\varepsilon(u):=\inf \left\{t\ge 0\mid \ \  W(\tilde x_\varepsilon(t))\ge \frac{1}{\varepsilon} \ \text{or} \  W(\tilde x_\varepsilon(t))\le \varepsilon \right\}\le+\infty.
$$ 
Clearly, $\tilde T_z^\varepsilon(u)>0$,  as  $\varepsilon<W(z)<\frac{1}{\varepsilon}$. Furthermore,  for all $t\in[0,\tilde T_z^\varepsilon(u))$,   the  solution $(\tilde x_\varepsilon,\tilde x^0_\varepsilon)$ corresponding to $u$ coincides with the solution, say  $(\tilde x,\tilde x^0)$, of  the control system $(\dot  x,\dot  x^0)=(\tilde f,\tilde\ell)(x,u)$ with  $(\tilde x,\tilde x^0)(0)=(z,r)$.  Hence, letting $\sigma$ tend to zero in \eqref{est_sigma}, for every $(z,r)\in\S_{\left(\varepsilon,\frac{1}{\varepsilon}\right)}\times \R$,  we get the inequality
$$
V(z,r)\ge \inf_{u\in\M([0,+\infty),U)}\sup_{T\in\left[0,\tilde T_z^\varepsilon(u)\right)} V(\tilde x(T),\tilde x^0(T)),
$$
that, by a recursive procedure as in \cite{Sor99}, finally implies  
\begin{equation}\label{est_0}
V(z,r)\ge \inf_{u\in\M([0,+\infty),U)}\sup_{T\in\left[0,\tilde T_z(u)\right)} V(\tilde x(T),\tilde x^0(T)) \quad\text{for all} \ (z,r)\in\R^{n+1},
\end{equation}
if $\tilde T_z(u)$ is the first time at which $\tilde x$ reaches the target $\C$ (possibly equal to $+\infty$).
\vsm
{\it Step 3.} Let $z\in\overline{\S}_M\setminus\C$, namely $0<W(z)\le M$, and fix   $0<\rho\le -\lambda(M)+\lambda(M+1)$. From \eqref{est_0} with initial point $(z,0)$,  it follows that there exists a control $u\in\M([0,+\infty),U)$ such that, for any $T\in[0, \tilde T_z(u))$, we get
$$
\displaystyle V(\tilde x(T),\tilde x^0(T)) =\lambda\left(W(\tilde x(T))+\int_0^{T}\tilde\ell(\tilde x(t),u(t))\,dt\right)\le V(z,0)+\rho\le \lambda(M+1),
$$
namely,
$$
\displaystyle W(\tilde x(T))+\int_0^{T}\tilde\ell(\tilde x(t),u(t))\,dt \le M+1.
$$
Therefore,   $\tilde x(t)$ belongs to $\S$ for all $t\in[0,\tilde T_z(u))$, so that $\tilde\eta$ as in Step 1 is identically 1. As a consequence, we have    $\tilde f(x,u)\equiv f(x,u)$, $\tilde\ell(x,u)\equiv p_0(W(x))l(x,u)+\gamma(W(x))$, $\tilde x(\cdot)\equiv x(\cdot\,,u,z)$, and  $\tilde T_z(u)\equiv T_z(u)$ as in Def. \ref{Admgen}, so that, in particular $u\in\U(z)$. By the arbitrariness of $M>0$,  the above considerations imply that $W$ satisfies \eqref{SOP} for all $z\in\R^n\setminus\C$. The proof of Proposition \ref{Th_supsol} is thus complete. \qed

\bibliographystyle{plain}
 \bibliography{lyap}




%

\end{document}